\theoremstyle{plain}
\newtheorem{lem}{Lemma}[section]
\newtheorem{theo}[lem]{Theorem}
\newtheorem{prop}[lem]{Proposition}
\newtheorem{cor}[lem]{Corollary}
\font\k=cmr7
  \newcommand {\ran}{\mbox{\k ran}}
  \newcommand {\comb}{\mbox{\k comb}}
  \newcommand {\pr}{\mbox{\k prime}}
  \newcommand {\ev}{\mbox{\k ev}}
  \newcommand {\C}{{\mathbb C}}
  \newcommand {\bH}{{\mathbb H}}
  \newcommand {\N}{{\mathbb N}}
  \newcommand {\R}{{\mathbb R}}
  \newcommand {\Z}{{\mathbb Z}}
  \newcommand {\af}{{\mathfrak a}}
  \newcommand {\bfr}{{\mathfrak b}}
  \newcommand {\gf}{{\mathfrak g}}
  \newcommand {\mf}{{\mathfrak m}}
  \newcommand {\kf}{{\mathfrak k}}
  \newcommand {\hf}{{\mathfrak h}}
  \newcommand {\nf}{{\mathfrak n}}
   \newcommand {\pf}{{\mathfrak p}}
  \newcommand {\Co}{{\mathcal C}}
 \newcommand {\cM}{{\mathcal M}}
\newcommand {\bs}{\backslash}
\newcommand {\wt}{\widetilde}
\renewcommand{\Im}{\operatorname{Im}}
\renewcommand{\Re}{\operatorname{Re}}
\newcommand{\Tr}{\operatorname{Tr}}
\newcommand{\Spec}{\operatorname{Spec}}
\newcommand{\End}{\operatorname{End}}
\newcommand{\tr}{\operatorname{tr}}
\newcommand{\Id}{\operatorname{Id}}
\newcommand{\I}{\operatorname{I}}
\newcommand{\vol}{\operatorname{vol}}
\newcommand{\SL}{\operatorname{SL}}
\newcommand{\GL}{\operatorname{GL}}
\newcommand{\SO}{\operatorname{SO}}
\newcommand{\SU}{\operatorname{SU}}
\newcommand{\Ad}{\operatorname{Ad}}
\renewcommand{\det}{\operatorname{det}}
\newcommand{\Rep}{\operatorname{Rep}}
\newcommand{\gr}{\mathrm{gr}}
\newcommand{\spectr}{\operatorname{spec}}
\begin{document}

\title[]
{On Fried's conjecture for compact hyperbolic manifolds}
\date{\today}

\author{Werner M\"uller}
\address{Universit\"at Bonn\\
Mathematisches Institut\\
Endenicher Allee 60\\
D -- 53115 Bonn, Germany}
\email{mueller@math.uni-bonn.de}

\keywords{Ruelle zeta function, analytic torsion}
\subjclass[2010]{Primary: 37C30, Secondary: 58J20}

\begin{abstract}
Fried's conjecture is concerned with the behavior of dynamical zeta functions
at the origin. For compact hyperbolic manifolds, Fried proved that  for
an orthogonal acyclic representation  of the fundamental group, 
the twisted Ruelle zeta function  is holomorphic at $s=0$ and its
value at $s=0$ equals the Ray-Singer analytic torsion. He also established
a more general result for orthogonal representations, which are not acyclic.
The purpose of the present paper is to extend Fried's result to arbitrary finite
dimensional representations of the fundamental group. The Ray-Singer analytic
torsion is replaced by the complex-valued torsion introduced by Cappell and
Miller.
\end{abstract}

\maketitle
\setcounter{tocdepth}{1}
\tableofcontents

\section{Introduction}
Let $X$ be a $d$-dimensional closed, oriented hyperbolic manifold. Then there
exists a discrete torsion free subgroup $\Gamma\subset\SO_0(d,1)$ such that
$X=\Gamma\bs\bH^d$, where $\bH^d=\SO_0(d,1)/\SO(d)$ is the $d$-dimensional 
hyperbolic space. Every $\gamma\in\Gamma\setminus\{e\}$ is hyperbolic and the
$\Gamma$-conjugacy class $[\gamma]$ corresponds to a unique closed geodesic
$\tau_\gamma$. Let $\ell(\gamma)$ denote the length of $\tau_\gamma$. A conjugacy
class is called prime if $\gamma$ is not a non-trivial power of some other
element of $\Gamma$. Let $\chi\colon\Gamma\to\GL(V_\chi)$ be a finite 
dimensional complex representation of $\Gamma$ and let $s\in\C$. 
Then the Ruelle zeta function 
$R(s,\chi)$ is defined by the following Euler product
\begin{equation}\label{ruelle-zeta1}
R(s,\chi):=\prod_{\substack{[\gamma]\not=e\\ [\gamma]\,\pr}}\det\left(\Id-\chi(\gamma)
e^{-s\ell(\gamma)}\right).
\end{equation}
The infinite product is absolutely convergent in a certain half plane $\Re(s)>C$
and admits a meromorphic extension to the entire complex plane \cite{Fr2},
\cite{Sp1}. The Ruelle zeta is a dynamical zeta function associated to the 
geodesic flow on the unit sphere bundle $S(X)$ of $X$. 
There are formal analogies with the zeta functions in number theory such as
the Artin $L$-function associated to a Galois representation. Analogues to
the role of zeta functions in number theory, one expects that special values
of the Ruelle zeta function provide a connection between the length spectrum of
closed geodesics and geometric and topological invariants of the manifold.

In \cite{Fr1}, Fried has established such a connection. To explain his result we
need to introduce some notation. Recall that a representation $\chi$ 
is called {\it acyclic}, if the cohomology $H^\ast(X,F_\chi)$ of $X$ with 
coefficients in the flat bundle $F_\chi\to X$ associated to $\chi$ vanishes. 
Let $\chi$ be an orthogonal acyclic representation. Then $F_\chi$ is equipped
with a canonical fibre metric which is compatible with the flat connection.
Let $\Delta_{k,\chi}$ be the Laplacian acting in the space $\Lambda^k(X,F_\chi)$
of $F_\chi$-valued $k$-forms. Regarded as operator in the space of $L^2$-forms,
it is essentially self-adjoint with a discrete spectrum $\Spec(\Delta_{k,\chi})$
consisiting of eigenvalues $\lambda$ of finite multiplicity $m(\lambda)$. 
Let $\zeta_k(s;\chi)=
\sum_{\lambda\in\Spec(\Delta_{k,\chi})}m(\lambda)\lambda^{-s}$ be the spectral zeta
function of $\Delta_{k,\chi}$ \cite{Shb}.  The series converges absolutely in the
half plane $\Re(s)>d/2$ and admits a meromorphic extension to the complex
plane, which is holomorphic at $s=0$. Then the Ray-Singer analytic torsion
 $T^{RS}(X,\chi)\in\R^+$ is defined by
\begin{equation}\label{analyt-tor}
\log T^{RS}(X,\chi):=\frac{1}{2}\sum_{k=1}^d (-1)^kk\frac{d}{ds}\zeta_k(s;\chi)
\big|_{s=0},
\end{equation}
\cite{RS}. Now we can state the result of  Fried
\cite[Theorem 1]{Fr1}. He proved that for an acyclic unitary representation
$\chi$ the Ruelle zeta function $R(s,\chi)$ is holomorphic at $s=0$ and
\begin{equation}\label{value-origin1}
|R(0,\chi)^\varepsilon|=T^{RS}(X,\chi)^2,
\end{equation}
where $\varepsilon=(-1)^{d-1}$ and the absolute value can be removed if $d>2$.
If $\chi$ is not acyclic, but still orthogonal, $R(s,\chi)$ may have a pole or 
zero at $s=0$. 
Fried \cite{Fr1} has determined the order of $R(s,\chi)$ at $s=0$ and the
leading coefficient of the Laurant expansion around $s=0$. Let 
$b_k(\chi):=\dim H^k(X,E_\chi)$. Assume that $d=2n+1$.  Put 
\[
h=2\sum_{k=0}^n(n+1-k)(-1)^kb_k(\chi).
\]
Then by \cite[Theorem 3]{Fr1}, the order of $R(s,\chi)$ at $s=0$ is $h$ and 
the leading term of the Laurent expansion of $R(s,\chi)$ at $s=0$ is 
\begin{equation}\label{laurent-exp1}
C(\chi)\cdot T^{RS}(X,\chi)^2s^h,
\end{equation}
where $C(\chi)$ is a constant that depends on the Betti numbers $b_k(\chi)$.  
In \cite[p. 66]{Fr4} Fried conjectured that \eqref{value-origin1} holds
for all compact locally symmetric manifolds $X$ and acyclic orthogonal 
bundles over $S(X)$. This conjecure was recently proved by Shu Shen \cite{Shu}.

Let $\chi$ be a unitary acyclic representation of $\Gamma$. Let 
$\tau(X,\chi)$ be the Reidemeister torsion \cite{RS}, \cite{Mu3}. It is 
defined in terms of a smooth triangulation of $X$. However, it is independent
of the particular $C^\infty$-triangulation. Since $\chi$
is acyclic, $\tau(X,\chi)$ is a topological invariant, i.e., it does not
depend on the metrics on $X$ and in $F_\rho$. 
By \cite{Ch}, \cite{Mu2} we have $T^{RS}(X,\chi)=\tau(X,\chi)$. Assume that 
$d$ is odd. Then \eqref{value-origin1} can be restated as
\begin{equation}\label{reidem-tor}
R(0,\chi)=\tau(X,\chi)^2.
\end{equation}
This provides an interesting relation between the length spectrum of closed
geodesics and a secondary topological invariant. 

Another class of interesting representations arises in the following way.
Let $G:=\SO_0(d,1)$. Let $\rho$ be a finite dimensional complex or real 
representation of $G$. Then $\rho|_\Gamma$ is a finite dimensional 
representation of $\Gamma$. In general, $\rho|_\Gamma$ is not an orthogonal
representation. However, the flat vector bundle $F_\rho$ associated with 
$\rho|_\Gamma$ can be equipped with a canonical fibre metric which allows
the use of methods of harmonic analysis to study the Laplace operators
$\Delta_{k,\rho}$. Put
\[
R(s,\rho):=R(s,\rho|_\Gamma).
\]
The behavior of $R(s,\rho)$ at $s=0$  has been studied by Wotzke \cite{Wo}. 
Let $\theta\colon G\to G$ be the Cartan involution of $G$ with respect to
$K=\SO(d)$. Let $\rho_\theta:=\rho\circ\theta$. Also denote by $T^{RS}(X,\rho)$
the analytic torsion of $X$ with respect to $\rho|_\Gamma$ and an admissible
metric in $F_\rho$.  Assume that $\rho\not\cong\rho_\theta$. Then Wotzke \cite{Wo} has proved that $R(s,\rho)$ is holomorphic at $s=0$ and 
\begin{equation}\label{value-origin2}
|R(0,\rho)|=T^{RS}(X,\rho)^2.
\end{equation}
If $\rho\cong\rho_\theta$, then $R(s,\rho)$ may have a zero or a pole at $s=0$.
Wotzke \cite{Wo} has also determined the order of $R(s,\rho)$ at $s=0$
and the coefficient of the leading term of the Laurent expansion of $R(s,\rho)$
at $s=0$. As in \eqref{laurent-exp1} the main contribution to the coefficient 
is the analytic torsion.

Let $\tau(X,\rho)$ be the Reidemeister torsion \cite{Mu3} of $X$ with respect to
$\rho|_\Gamma$.
If $\rho\not\cong\rho_\theta$, the cohomology $H^\ast(X,F_\rho)$ vanishes
\cite[Chapt. VII, Theorem 6.7]{BW}. Then $\tau(X,\rho)$ is independent of
the metrics on $X$ and in $F_\rho$. 
By \cite[Theorem 1]{Mu3} we have $T^{RS}(X,\rho)=\tau(X,\rho)$. Thus
\eqref{value-origin2} can be restated as
\begin{equation}\label{value-origin4}
|R(0,\rho)|=\tau(X,\rho)^2.
\end{equation}
This equality has interestig consequences for arithmetic subgroups $\Gamma$. 
Assume that there exists a $\Gamma$-invariant lattice $M_\rho\subset V_\rho$.
Let $\cM_\rho\to X$ be the associated local system of free $\Z$-modules of
finite rank. The cohomology $H^\ast(X,\cM_\rho)$ is a finitely generated 
abelian group. If $\rho_\Gamma$ is acyclic, $H^\ast(X,\cM_\rho)$ is a finite
abelian group. Denote by $|H^k(X,\cM_\rho)|$ the order of $H^k(X,\cM_\rho)|$.
By \cite[(1.4)]{Ch}, \cite[Sect. 2.2]{BV}, $\tau(X,\rho)$ 
can be expressed in terms of $|H^k(X,\cM_\rho)|$, $k=0,...,d$. Combined with 
\eqref{value-origin4} we get
\begin{equation}\label{value-origin5}
|R(0,\rho)|= \prod_{k=0}^d |H^k(X,\cM_\rho)|^{(-1)^{k+1}}.
\end{equation}
This is another interesting realtion between the length spectrum of $X$ and
topological invariants of $X$.

For arithmetic subgroups $\Gamma\subset G$, representations of $G$ with 
$\Gamma$-invariant lattices in the corresponding representation space exist. 
See \cite{BV}, \cite{MaM}.

The main purpose of this paper is to extend the above results about the 
behaviour of the Ruelle zeta function at $s=0$ to every finite dimensional
representation $\chi$ of $\Gamma$. To this end we use a complex
version $T^\C(X,\chi)$ of the analytic torsion, which was introduced by 
Cappell and Miller \cite{CM}. It is defined in terms of the flat Laplacians
$\Delta_{k,\chi}^\sharp$, $k=0,...,d$, which are obtained by coupling the 
Laplacian $\Delta_k$ on $k$-forms to the flat bundle $F_\chi$ (see section
\ref{sec-coupling} for its definition). In general, the flat Laplacian 
$\Delta_{k,\chi}^\sharp$ is not self-adjoint. However, its principal symbol 
equals the principal symbol of a Laplace type operator. Therefore, it has 
good spectral properties which allows to carry over most of the results from
the self-adjoint case. The Cappell-Miller torsion $T^\C(X,\chi)$ is defined
as an element of the determinant line
\[
T^\C(X,\chi)\in\det H^\ast(X,F_\chi)\otimes(\det H_\ast(X,F_\chi))^\ast.
\]
For an acyclic representation $T^\C(X,\chi)$  is a complex number and
\begin{equation}\label{abs-value}
|T^\C(X,\chi)|=T^{RS}(X,\chi)^2,
\end{equation}
 where $T^\C(X,\chi)$ is the Ray-Singer analytic
torsion with respect to any choice of a fibre metric in $F_\chi$. Since
$\chi$ is acyclic, $T^{RS}(X,\chi)$ is independent of the choice of the metric 
in $F_\chi$. 

Let $V_0^k$ be the generalized eigenspace of $\Delta_{k,\chi}^\sharp$, 
$k=0,...,d$, with generalized eigenvalue $0$. Let $d^{\ast,\sharp}_\chi$ be the 
coupling of the codifferential $d^\ast_\chi\colon\Lambda^\ast(X)\to\Lambda^\ast(X)$
to the flat bundle $F_\chi$. Then $(V_0^\ast,d_\chi,d^{\ast,\sharp}_\chi)$ is a 
double complex in the sense of \cite[\S 6]{CM}. Let
\begin{equation}
T_0(X,\chi)\in\det H^\ast(X,F_\chi)\otimes(\det H_\ast(X,F_\chi))^\ast.
\end{equation}
be its torsion \cite[\S 6]{CM}. 
We note that $T^\C(X,\chi)$ and $T_0(X,\chi)$ are both non-zero elements of the
determinant line $\det H^\ast(X,F_\chi)\otimes(\det H_\ast(X,F_\chi))^\ast$. 
Hence there exists $\lambda\in\C$ with $T^\C(X,\chi)=\lambda T_0(X,\chi)$.
Set
\[
\frac{T^\C(X,\chi)}{T_0(X,\chi)}:=\lambda.
\]
Put
\begin{equation}
h_k:=\dim V_0^k,\quad k=0,...,d.
\end{equation}
Furthermore, let $d=2n+1$ and put
\begin{equation}\label{order-sing1}
h:=\sum_{k=0}^n(d+1-2k)(-1)^kh_k
\end{equation}
and
\begin{equation}\label{const1}
C(d,\chi):=\prod_{k=0}^{d-1}\prod_{p=k}^{d-1} \left(2(n-p)\right)^{(-1)^kh_k}.
\end{equation}
Then our main result is the following theorem.
\begin{theo}\label{theo-sing}
Let $\chi$ be a finite dimensional complex representation of $\Gamma$. Let
$h$ be defined by \eqref{order-sing1}. 
Then the order of the singularity of $R(s,\chi)$ at $s=0$ is $h$ and
\begin{equation}\label{origin}
\lim_{s\to0} s^{-h}R(s,\chi)=C(d,\chi)\cdot\frac{T^\C(X,\chi)}{T_0(X,\chi)}.
\end{equation}
\end{theo}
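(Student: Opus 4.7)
The approach adapts the template of Fried \cite{Fr1} and Wotzke \cite{Wo} to the non-self-adjoint flat Laplacian $\Delta_{k,\chi}^\sharp$. The first step is a Selberg-type factorization of the Ruelle zeta on an odd-dimensional hyperbolic manifold as an alternating product
\[
R(s,\chi) \;=\; \prod_{p=0}^{d-1} S_p(s+\sigma_p,\chi)^{(-1)^p},
\]
where $S_p(\,\cdot\,,\chi)$ is a Selberg-type zeta attached to the exterior-power representation $\Lambda^p$ of $K=\SO(d)$ and $\sigma_p$ an explicit real shift. This product is obtained as in the unitary case from the tensor-product decomposition of the Ruelle Euler factor and the description of the geodesic flow on the unit sphere bundle as a $K$-quotient. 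Each $S_p(s,\chi)$ admits a determinant formula expressing it, up to an explicit exponential-polynomial factor, as a graded zeta-regularized determinant of $\Delta_{p,\chi}^\sharp + r_p(s)$ with $r_p(s)=s(s+2(n-p))$; this formula is available in the non-self-adjoint setting because $\Delta_{p,\chi}^\sharp$ has the principal symbol of a Laplace-type operator and Seeley's calculus supplies the short-time heat trace asymptotics.

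The second step is the Laurent expansion at $s=0$. For $p\ne n$ the shift $r_p(s)$ vanishes linearly with derivative $r_p'(0)=2(n-p)$, while at $p=n$ it vanishes quadratically. Consequently each factor $S_p(s,\chi)$ contributes at $s=0$ a zero or pole whose order is governed by $h_p=\dim V_0^p$ (doubled at $p=n$) and whose leading coefficient carries a prefactor $\bigl(2(n-p)\bigr)^{h_p}$. Collecting signs in the alternating product and using Poincar\'e duality $V_0^p\cong V_0^{d-p}$ to pair the indices $p$ and $d-1-p$, the total order collapses to
\[
h \;=\; \sum_{k=0}^{n}(d+1-2k)(-1)^k h_k,
\]
and the product of the prefactors assembles into the combinatorial constant $C(d,\chi)$ of \eqref{const1}.

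The third step is to identify the surviving alternating product of non-degenerate graded regularized determinants with $T^\C(X,\chi)/T_0(X,\chi)$. By its construction, $T^\C(X,\chi)\in\det H^\ast(X,F_\chi)\otimes(\det H_\ast(X,F_\chi))^\ast$ is defined as an alternating product of graded regularized determinants of $\Delta_{k,\chi}^\sharp$ restricted to the complement of $V_0^\ast$, while $T_0(X,\chi)$, in the same line, is the algebraic torsion of the finite-dimensional double complex $(V_0^\ast,d_\chi,d^{\ast,\sharp}_\chi)$. Their quotient, viewed in $\C$, is exactly what emerges from the Selberg factorization once the factor $C(d,\chi)\,s^h$ has been extracted, yielding \eqref{origin}.

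The main obstacle is the non-self-adjointness of $\Delta_{k,\chi}^\sharp$: Hodge theory, the spectral theorem, and the positivity of eigenvalues exploited by Fried and Wotzke are no longer available, so one must work throughout with generalized eigenspace decompositions and with Seeley's calculus for elliptic operators of Laplace-type symbol. The technical heart is to match the contribution of each generalized $0$-eigenspace to the leading Laurent coefficient of the Selberg factor $S_p(s,\chi)$ with the algebraic torsion of the double complex on $V_0^\ast$, so that the singular behaviour at $s=0$ factors cleanly as $C(d,\chi)\cdot s^h\cdot T^\C(X,\chi)/T_0(X,\chi)$.
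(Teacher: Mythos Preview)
Your plan is essentially the paper's own argument: the paper applies the determinant formula for $R(s,\chi)$ (Proposition~\ref{prop-det} with $\sigma=1$), analyzes each factor at $s=0$ by splitting off the generalized $0$-eigenspace, collects the powers of $2(n-p)$ into $C(d,\chi)$, and then reads off the quotient $T^\C(X,\chi)/T_0(X,\chi)$ directly from the definition \eqref{anal-tor1}.

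Two points where your outline should be corrected. First, the Selberg factors in the Ruelle factorization are attached to the exterior powers $\sigma_p$ of $M=\SO(d-1)$, not of $K=\SO(d)$; more importantly, the associated determinant formula does \emph{not} produce a single $\det(\Delta^\sharp_{p,\chi}+r_p(s))$. Because $\sigma_p=i^\ast\bigl(\sum_{k=0}^{p}(-1)^k\lambda_{p-k}\bigr)$, the graded bundle $E(\sigma_p)$ decomposes and one obtains instead (after the paper's reindexing) the double product
\[
R(s,\chi)=\prod_{k=0}^{d-1}\prod_{p=k}^{d-1}\det\bigl(\Delta^\sharp_{k,\chi}+s(s+2(n-p))\bigr)^{(-1)^k},
\]
in which each Laplacian $\Delta^\sharp_{k,\chi}$ occurs with several different shifts. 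Your bookkeeping for the order $h$ and for $C(d,\chi)$ must be done against this double product; the paper's Lemma~\ref{kernel} ($h_p=h_{d-p}$) is exactly the Poincar\'e duality input you mention. Second, the final identification is not a delicate matching with the torsion of the double complex on $V_0^\ast$: once the $s\to 0$ limit yields $\prod_{k}\det\bigl((\Delta^\sharp_{k,\chi})'\bigr)^{(-1)^{k+1}k}$ (after using $\Delta^\sharp_{k,\chi}\cong\Delta^\sharp_{d-k,\chi}$ to turn the exponent $(d-k)(-1)^k$ into $k(-1)^{k+1}$), this product is \emph{by definition} $T^\C(X,\chi)/T_0(X,\chi)$ via \eqref{anal-tor1}. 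No further comparison is needed.
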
 
Now choose a triangulation of $X$. Let $\tau_{\comb}(X,\chi)\in
\det H^\ast(X,F_\chi)
\otimes(\det H_\ast(X,F_\chi))^\ast$ be the combinatorial torsion defined Cappell 
and Miller \cite[Sect. 9]{CM}. It is independent of the choice of the
triangulation. By \cite[Theorem 10.1]{CM} we have
$T^\C(X,\chi)=\tau_{\comb}(X,\chi)$. Thus we can restate Theorem \ref{theo-sing}
as
\begin{equation}\label{origin1}
\lim_{s\to0} s^{-h}R(s,\chi)=C(d,\chi)\cdot\frac{\tau_{\comb}(X,\chi)}{T_0(X,\chi)}.
\end{equation}
If $\chi$ is acyclic, then $T^\C(X,\chi)$, $T_0(X,\chi)$ and 
$\tau_{\comb}(X,\chi)$ 
are complex numbers and on the right hand side of \eqref{origin} and 
\eqref{origin1} appear quotients of complex numbers. 
 
Now we apply Theorem \ref{theo-sing} to representations of $\Gamma$ which
are restrictions of representations of $G$. Then we get
\begin{cor}\label{cor-origin1}
Let $\rho\in\Rep(G)$ and assume that $\rho\not\cong\rho_\theta$. Then $R(s,\rho)$
is holomorphic at $s=0$ and
\begin{equation}\label{value-origin6}
R(0,\rho)=C(d,\rho)\cdot\frac{T^\C(X,\rho)}{T_0(X,\rho)}.
\end{equation}
\end{cor}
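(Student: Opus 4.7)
The plan is to deduce the corollary as a direct specialization of Theorem~\ref{theo-sing} applied to $\chi = \rho|_\Gamma$, once we verify that every generalized $0$-eigenspace $V_0^k$ of the flat Laplacian $\Delta^\sharp_{k,\rho}$ vanishes under the hypothesis $\rho \not\cong \rho_\theta$.

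First I invoke \cite[Chapt.~VII, Theorem~6.7]{BW}, already used in the discussion of \eqref{value-origin4}, to obtain $H^\ast(X, F_\rho) = 0$. The key step is then to upgrade this cohomological vanishing to the vanishing of all the $V_0^k$. Although $\Delta^\sharp_{k,\rho}$ is not self-adjoint, its principal symbol is that of a Laplace type operator, so one has a generalized Hodge decomposition of $\Lambda^k(X, F_\rho)$ as a direct sum of the generalized eigenspaces of $\Delta^\sharp_{k,\rho}$. A standard algebraic consequence of this decomposition -- which in fact underlies the very definition of $T^\C(X, \chi)$ and $T_0(X, \chi)$ as non-zero elements of $\det H^\ast(X, F_\chi) \otimes (\det H_\ast(X, F_\chi))^\ast$ in the introduction -- is the canonical isomorphism $V_0^k \cong H^k(X, F_\chi)$; see \cite{CM}. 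Hence $V_0^k = 0$ and therefore $h_k = 0$ for every $k$.

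Substituting $h_k = 0$ into \eqref{order-sing1} gives $h = 0$, so that $R(s, \rho)$ is holomorphic at $s = 0$; and every factor $(2(n-p))^{(-1)^k h_k}$ in \eqref{const1} equals $1$ (with the convention $0^0 = 1$ when $p = n$), so that $C(d, \rho) = 1$. Theorem~\ref{theo-sing} therefore specializes to
\[
R(0, \rho) \;=\; C(d, \rho)\cdot\frac{T^\C(X, \rho)}{T_0(X, \rho)},
\]
which is exactly \eqref{value-origin6}. The only real obstacle is the Hodge-type identification $V_0^k \cong H^k(X, F_\chi)$ for the non-self-adjoint operator $\Delta^\sharp_{k,\rho}$, but this is part of the Cappell--Miller framework already invoked in the introduction and requires no input specific to the locally symmetric setting.
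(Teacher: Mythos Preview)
Your argument hinges on the claimed identification $V_0^k \cong H^k(X,F_\rho)$, so that acyclicity would force every $h_k$ to vanish. This is false. What the Cappell--Miller framework gives is that the \emph{cohomology of the subcomplex} $(V_0^\ast,d_\chi)$ is canonically isomorphic to $H^\ast(X,F_\chi)$; it does not say that the individual spaces $V_0^k$ coincide with $H^k(X,F_\chi)$. When $\Delta^\sharp_{k,\chi}$ is not self-adjoint the restriction of $d_\chi$ to $V_0^\ast$ need not be zero, and $V_0^k$ can be strictly larger than the cohomology in that degree. The paper itself supplies an explicit counterexample immediately after the corollary: for $d=3$ and the even symmetric powers $\rho_m$ of the standard representation of $\SL(2,\C)$ one has $\rho_m\not\cong(\rho_m)_\theta$ and $H^\ast(X,F_{\rho_m})=0$, yet $h_0=1$ and $h_1=2h_0=2$ (so that $h=4h_0-2h_1=0$, but the individual $h_k$ do not vanish). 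Correspondingly $C(3,\rho_m)=(-4)^{h_0}=-4\neq1$ and $|T_0(X,\rho_m)|=2\neq1$, in direct contradiction to your conclusions.

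The paper instead deduces $h=0$ from Wotzke's analytic result: by \eqref{value-origin2} one has $|R(0,\rho)|=T^{RS}(X,\rho)^2>0$, so $R(s,\rho)$ is regular and nonzero at $s=0$, whence its order $h$ there is zero, and Theorem~\ref{theo-sing} then yields \eqref{value-origin6} directly. No control over the individual $h_k$ is obtained or needed.
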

Using \eqref{value-origin2} and \eqref{abs-value}, it follows that
\begin{equation}
|T_0(X,\rho)|=C(d,\rho).
\end{equation}
Let $d=3$. Then $\bH^3\cong\SL(2,\C)/\SU(2)$. For $m\in\N$ let $\rho_m\colon
\SL(2,\C)\to \SL(S^m(\C^2))$ be the $m$-th symmetric power of the standard 
representation of $\SL(2,\C)$ on $\C^2$. For a compact, oriented 
hyperbolic $3$-manifold $X=\Gamma\bs\bH^3$ and the
representations $\rho_m$, $m\in\N$, Corollary \ref{cor-origin1} was proved 
by J. Park \cite[(5.5)]{Pa}. He also determined the constant $C(3,\rho_m)$ and
$|T_0(X,\rho_m)|$. By \cite[Prop. 5.1]{Pa} we have
\begin{equation}\label{value}
\begin{split}
&h_0=1\quad\text{and}\quad|T_0(X,\rho_m)|=2,\quad\text{if}\;m\;\text{is}\;
\text{even}\\
&h_0=0\quad\text{and}\;\;\quad T_0(X,\rho_m)=1,\quad\text{if}\;m\;\text{is}\;
\text{odd}.
\end{split}
\end{equation}
Moreover $C(3,\rho_m)=(-4)^{h_0}$. The order $h$ of $R(s,\rho_m)$ at
$s=0$ is zero. Thus by \eqref{order-sing1} we have $h_1=2h_0$. Note that
$\rho_m$ is acyclic. Let $\Delta_{k,\rho_m}$ be the usual Laplacian in
$\Lambda^k(X,F_{\rho_m})$ with respect to the admissible metric in $F_{\rho_m}$.
Then for $m\in\N$ even we have
\begin{equation}
\ker\Delta_{k,\rho_m}=0,\quad \ker\Delta_{k,\rho_m}^\sharp\neq 0,\quad k=0,...,
\end{equation}
which shows that for acyclic representations $\chi$, in general, the flat 
Laplacian $\Delta_{k,\chi}^\sharp$  need not be invertible. 

Again  we can replace $T^\C(X,\rho)$ in \eqref{value-origin6} by the 
combinatorial torsion $\tau_{\comb}(X,\rho)$. However, in the present case we
can replace the combinatorial torsion by the complex Reidemeister torsion.
Since $G$ is a connected semisimple Lie group and $\rho$ a representation of 
$G$, it follows from \cite[Lemma 4.3]{Mu3} that $\rho$ is actually a 
representation in $\SL(n,\C)$. This implies that the complex Reidemeister
torsion $\tau^\C(X,\rho)\in\C^\ast/\{\pm1\}$ can be 
defined as the usual Reidemeister torsion $\tau(X,\rho)$ 
\cite[Definition 1.1]{RS}, where the absolute value of the 
determinant is deleted. In particular, we have
\begin{equation}
|\tau^\C(X,\rho)|=\tau(X,\rho).
\end{equation}
Using \eqref{value-origin6} we get
\begin{equation}\label{compl-reidem}
R(0,\rho)=C(d,\rho)\cdot\frac{\tau^\C(X,\rho)^2}{T_0(X,\rho)}.
\end{equation}

Another case to which Theorem \ref{theo-sing} can be applied are deformations
of unitary acyclic representations. Let $\Rep(\Gamma,\C^n)$ be the set of all
$n$-dimensional complex representations of $\Gamma$ equipped with usual
topology.  Let $\Rep^u_0(\Gamma,\C^n)
\subset \Rep(\Gamma,\C^n)$ be the subset of all unitary acyclic representations
(see section 6.2). By \cite[Theorem 1.1]{FN}, we have $\Rep^u_0(\Gamma,\C^n)
\neq\emptyset$. There exists a neighborhood $V$ of $\Rep^u_0(\Gamma,\C^n)$ in
$\Rep(\Gamma,\C^n)$ such that $\Delta_{\chi}^\sharp$ is invertible for all
$\chi\in V$. Using Theorem \ref{theo-sing}, we get
\begin{prop}\label{prop-acyclic}
Let $\chi\in V$. Then $R(s,\chi)$ is regular at $s=0$ and 
\[
R(0,\chi)=T^\C(X,\chi).
\]
\end{prop}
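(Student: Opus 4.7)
The plan is to derive the proposition as a direct specialization of Theorem~\ref{theo-sing} to the representations for which the flat Laplacian has no zero generalized eigenvalue. First, I would unwind the hypothesis: since $\Delta_{k,\chi}^\sharp$ is assumed invertible for every degree $k$ and every $\chi \in V$, its generalized $0$-eigenspace $V_0^k$ is trivial, so that $h_k = 0$ for every $k$. Substituting this into \eqref{order-sing1} gives $h = 0$, which by Theorem~\ref{theo-sing} already yields that $R(s,\chi)$ is regular (indeed nonzero) at $s = 0$.

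Second, I would check that the normalizing constants collapse. The product \eqref{const1} consists entirely of factors $\bigl(2(n-p)\bigr)^{(-1)^k h_k}$ with zero exponents, so $C(d,\chi) = 1$. Likewise, the double complex $(V_0^\ast, d_\chi, d^{\ast,\sharp}_\chi)$ is the zero complex, and its Cappell--Miller torsion from \cite[\S 6]{CM} is the unit element of the (trivial) determinant line, i.e.\ $T_0(X,\chi) = 1$. Feeding $C(d,\chi) = 1$ and $T_0(X,\chi) = 1$ into the limit formula \eqref{origin} then delivers $R(0,\chi) = T^\C(X,\chi)$.

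The only non-routine ingredient is the existence of the open neighborhood $V$ itself, which is asserted just before the proposition and which I would take as input from the earlier section defining $\Rep^u_0(\Gamma,\C^n)$. The mildly subtle point there is that $\Delta_{k,\chi}^\sharp$ is not self-adjoint for non-unitary $\chi$, so one cannot appeal to min-max arguments: invertibility at a unitary acyclic $\chi_0$, where $\Delta_{k,\chi_0}^\sharp$ reduces to the ordinary positive Laplacian and hence is invertible by acyclicity, must be propagated to a neighborhood by an upper-semicontinuity or analytic perturbation argument for the discrete spectrum of a family of elliptic operators of Laplace type. Once that is granted, the proof of the proposition itself reduces to the bookkeeping indicated above, and is essentially a corollary of Theorem~\ref{theo-sing}.
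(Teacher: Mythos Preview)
Your proposal is correct and matches the paper's own argument essentially verbatim: the paper establishes the existence of $V$ as Lemma~\ref{lem-nbh} via exactly the perturbation argument you sketch, and then deduces the proposition from Theorem~\ref{theo-sing} by observing $h_k=0$, hence $h=0$, $C(d,\chi)=1$, and $T_0(X,\chi)=1$.
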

This proposition was first proved by P. Spilioti \cite{Sp3} using the odd
signature operator \cite{BK1}. She also discussed the relation with the 
refined analytic torsion.

\section{Coupling differential operators to a flat bundle}\label{sec-coupling}
\setcounter{equation}{0}
We recall a construction of the flat extension of a differential operator 
introduced in \cite{CM}. 
Let $X$ be a smooth manifold and $E_1$ and $E_2$ complex vector bundles over 
$X$. Let 
\[
D\colon C^\infty(X,E_1)\to C^\infty(X,E_2)
\]
be a differential operator. Let $F\to X$ be a flat vector bundle. Then there
is a canonically operator 
\[
D_F^\sharp\colon C^\infty(X,E_1\otimes F)\to C^\infty(X,E_2\otimes F)
\]
associated to $D$, which is defined as follows. Let $U\subset X$ be an open
subset such that $F|_U$ is trivial. Let $s_1,\dots,s_k\in C^\infty(U,F|_U)$ be
a local frame field of flat sections. Every section $\varphi$ of 
$(E_1\otimes F)|_U$ can be written as 
\[
\varphi=\sum_{i=1}^k \psi_i\otimes s_i
\]
for some sections $\psi_1,...,\psi_k\in C^\infty(U,E_1|U)$. Then define 
\[
D_F^\sharp|_U\colon C^\infty(U,(E_1\otimes F)|_U)\to C^\infty(U,(E_2\otimes F)|_U)
\]
by
\[
(D_F^\sharp|_U)(\varphi):=\sum_{i=1}^k D(\psi_i)\otimes s_i.
\]
Let $s_1^\prime,...,s_k^\prime$ be another local frame field of flat sections of
$F|_U$. Then $s_i=\sum_{j=1}^k f_{ij}s_j^\prime$, $i=1,...,k$, with $f_{ij}\in 
C^\infty(U)$, and it follows that
the transition functions $f_{ij}$ are constant. Since $D$ is linear, 
$(D_F^\sharp|_U)(\varphi)$ is independent of the choice of the local frame 
field of flat sections and therefore, $D_F^\sharp$ is globally well defined. 
Let $\sigma(D)$
be the principal symbol of $D$. Then the principal symbol $\sigma(D_F^\sharp)$
of $D_F^\sharp$ is given by $\sigma(D_F^\sharp)=\sigma(D)\otimes \Id_F$. Thus if
$D$ is elliptic, then $D_F^\sharp$ is also an elliptic differential operator. 

As an example consider a Riemannian manifold $X$ and the Laplace 
operator $\Delta_p$ on $p$-forms. Let $F$ be a flat bundle over $X$. Denote 
by $\Lambda^p(X,F)$ the space of smooth $F$-valued $p$-forms, i.e., 
$\Lambda^p(X,F)=C^\infty(X,\Lambda^pT^\ast(X)\otimes F)$. By the construction 
above we obtain the flat Laplacian 
$\Delta_{p,F}^\sharp\colon \Lambda^p(X,F)\to\Lambda^p(X,F)$. If the flat bundle
is fixed, we will denote the flat Laplacian simply by $\Delta_p^\sharp$. 
The flat Laplacian can be also described as the usual Laplacian. Let
$d_F\colon \Lambda^{p-1}(X,F)\to \Lambda^p(X,F)$ be the exterior derivative
defined as above. Let $\star\colon \Lambda^p(X)\to \Lambda^{n-p}(X)$ denote the
Hodge $\star$-operator. Then the flat extension 
\[
d_F^{\ast,\sharp}\colon \Lambda^p(X,F)\to \Lambda^{p-1}(X,F)
\]
of the co-differential $d^\ast$ is given by
\[
d_F^{\ast,\sharp}=(-1)^{np+n+1}(\star\otimes \Id_F)\circ d_F\circ(\star\otimes \Id_F).
\]
Then $d_F^{\ast,\sharp}$ satisfies $d_F^{\ast,\sharp}\circ d_F^{\ast,\sharp}=0$ and we
have
\[
\Delta_{F}^\sharp=(d_F+d_F^{\ast,\sharp})^2.
\]
If we choose a Hermitian fibre metric on $F$, we can define the usual Laplace
operator $\Delta_F$ in $\Lambda^p(X,F)$, which is defined by
\[
\Delta_F=(d_F+d_F^\ast)^2=d_Fd_F^\ast+d_F^\ast d_F,
\]
which is formally self-adjoint. Now note that $d_F^{\ast,\sharp}=d_F^\ast+B$,
where $B$ is a smooth homomorphism of vector bundles. Thus it follows that 
$\Delta_F^\sharp=\Delta_F+(Bd_F+d_FB)$. Thus the principal symbol 
$\sigma(\Delta_F^\sharp)(x,\xi)$ of $\Delta_F^\sharp$ is given by 
\begin{equation}\label{symbol}
\sigma(\Delta_F^\sharp)(x,\xi)=\|\xi\|^2_x\Id_{\Lambda^p T^\ast_x(X)\otimes F_x},
\quad x\in X,\;\xi\in F_x.
\end{equation}

More generally, let $E\to X$ be a Hermitian vector bundle over $X$. Let 
$\nabla$ be a covariant derivative in $E$ which is compatible 
with the Hermitian metric. We denote by $C^\infty(X,E)$ the space of smooth 
sections of $E$. Let 
\[
\Delta_E=\nabla^*\nabla
\]
be the Bochner-Laplace operator associated to the connection $\nabla$ and the
Hermitian fiber metric. Then $\Delta_E$ is a second order elliptic differential
operator. Its leading symbol
$\sigma(\Delta_E)\colon \pi^*E\to\pi^*E$, where $\pi$ is the projection of
$T^*X$,  is given by
\begin{equation}\label{symbol0}
\sigma(\Delta_E)(x,\xi)=\parallel\xi\parallel^2_x\cdot\Id_{E_x},\quad x\in X,\;
\xi\in T_x^*X.
\end{equation}
Let $F\to X$ be a flat vector bundle and 
\[
\Delta_{E\otimes F}^\sharp\colon C^\infty(X,E\otimes F)\to C^\infty(X,E\otimes F)
\]
the coupling of $\Delta_E$ to $F$. Then the principal symbol of 
$\Delta_{E\otimes F}^\sharp$ is given by
\begin{equation}\label{prin-symp}
\sigma(\Delta_{E\otimes F}^\sharp)(x,\xi)=\|\xi\|^2_x\cdot\Id_{E_x\otimes F_x}.
\end{equation}

\section{Regularized determinants and analytic torsion}\label{sec-det}

Let $\Delta_E$ be as above. Let 
\[
P\colon C^\infty(X,E)\to C^\infty(X,E)
\]
be an elliptic second order differential operator which is a perturbation of 
$\Delta_E$ by a first order differential operator, i.e., 
\begin{equation}\label{perturb}
P=\Delta_E+D,
\end{equation}
where $D\colon C^\infty(X,E)\to C^\infty(X,E)$ is a first oder differential 
operator. This implies that $P$ is an elliptic second order differential 
operator with leading symbol $\sigma(P)(x,\xi)$ given by
 \begin{equation}\label{lead-symb}
\sigma(P)(x,\xi):=\|\xi\|^2_x\cdot\Id_{E_x}.
\end{equation}
Though $P$ is not self-adjoint in general, it still has nice spectral properties
\cite[Chapt. I, \S 8]{Shb}. We recall the basic facts. For $I\subset[0,2\pi]$ 
let
\begin{equation}\label{cone}
\Lambda_I=\{re^{i\theta}\colon 0\le r<\infty,\;\theta\in I\}.
\end{equation}
The following lemma describes the 
structure of the spectrum of $P$. 
\begin{lem}\label{spec1}
For every $ 0<\varepsilon<\pi/2$ there exists $R>0$ such that the spectrum of
$P$ is contained in the set $B_R(0)\cup \Lambda_{[-\varepsilon,\varepsilon]}$. 
Moreover the spectrum of $P$ is discrete.
\end{lem}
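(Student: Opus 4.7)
The plan is to deduce both statements from the theory of elliptic operators with a parameter, as presented in \cite[Chapt. I, \S 8]{Shb}. The operator $P=\Delta_E+D$ is a second-order elliptic differential operator on the closed manifold $X$, and by \eqref{lead-symb} its principal symbol is the positive scalar $\|\xi\|_x^2$ times $\Id_{E_x}$; the summand $D$ is only first order, and thus a genuinely lower-order perturbation. This is exactly the setting in which $P$ is elliptic with parameter in any closed conical sector disjoint from the positive real axis.

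For the cone statement, fix $\varepsilon\in(0,\pi/2)$ and consider $\lambda$ in the closed set $\Sigma_\varepsilon:=\C\setminus\Lambda_{(-\varepsilon,\varepsilon)}$. For $(x,\xi,\lambda)$ with $\|\xi\|_x^2+|\lambda|\neq 0$, the parameter-dependent principal symbol
\[
\sigma(P-\lambda)(x,\xi)=\bigl(\|\xi\|_x^2-\lambda\bigr)\Id_{E_x}
\]
is invertible, and $|\|\xi\|_x^2-\lambda|\geq c(\varepsilon)(\|\xi\|_x^2+|\lambda|)$ uniformly in $x$; this is Agmon's parameter-ellipticity condition in $\Sigma_\varepsilon$. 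The Agmon--Seeley parametrix construction then produces a properly supported parameter-dependent parametrix $Q(\lambda)$ of order $-2$ satisfying
\[
Q(\lambda)(P-\lambda)=I-R(\lambda),\qquad (P-\lambda)Q(\lambda)=I-R'(\lambda),
\]
where the $L^2$-operator norms of $R(\lambda)$ and $R'(\lambda)$ tend to zero as $|\lambda|\to\infty$ in $\Sigma_\varepsilon$. Consequently, there exists $R>0$ such that for every $\lambda\in\Sigma_\varepsilon$ with $|\lambda|\geq R$ the operator $P-\lambda$ is boundedly invertible on $L^2(X,E)$. This gives $\spectr(P)\cap\Sigma_\varepsilon\subset B_R(0)$, which rearranges to $\spectr(P)\subset B_R(0)\cup\Lambda_{[-\varepsilon,\varepsilon]}$.

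For discreteness, the step above already exhibits at least one regular value $\lambda_0\in\C$ of $P$. Since $X$ is compact, elliptic regularity places $(P-\lambda_0)^{-1}$ in the Sobolev space $H^2(X,E)$, and the Rellich embedding $H^2(X,E)\hookrightarrow L^2(X,E)$ is compact. Thus the resolvent is a compact operator on $L^2(X,E)$, and the analytic Fredholm theorem then shows that $\spectr(P)$ is discrete and consists of eigenvalues of finite algebraic multiplicity. The only non-routine ingredient in the argument is the parameter-dependent parametrix construction, but this is entirely standard, so in practice the proof amounts to checking the parameter-ellipticity condition and invoking \cite[Chapt. I, \S 8]{Shb}.
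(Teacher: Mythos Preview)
Your proof is correct and follows essentially the same route as the paper: both arguments invoke Shubin's theory of parameter-elliptic operators on closed manifolds. The paper simply cites \cite[Theorem 9.3]{Shb} for the sector containment and \cite[Theorem 8.4]{Shb} for discreteness, whereas you unpack the content of those results (Agmon parameter-ellipticity, parametrix with small remainder, compact resolvent via Rellich, analytic Fredholm).
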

\begin{proof} The first statement follows from \cite[Theorem 9.3]{Shb}. 
The discreteness of the spectrum follows from \cite[Theorem 8.4]{Shb}. 
\end{proof}
For $\lambda\in\C\setminus\spectr(P)$ let $R_\lambda(P):=(P-\lambda\Id)^{-1}$
be the resolvent. Given $\lambda_0\in\spectr(P)$, let
$\Gamma_{\lambda_0}$ be a small circle around $\lambda_0$ which
contains no other points of $\spectr(P)$. Put
\begin{equation}\label{project}
\Pi_{\lambda_0}=\frac{i}{2\pi}\int_{\Gamma_{\lambda_0}}R_\lambda(P)\;d\lambda.
\end{equation}
Then $\Pi_{\lambda_0}$ is the projection onto the {\it root subspace} 
$V_{\lambda_0}$.
This is a finite-dimensional subspace of $C^\infty(X,E)$ which
is invariant under $P$ and there exists $N\in\N$ such that 
$(P-\lambda_0\I)^NV_{\lambda_0}=0$. Furthermore, there is a closed complementary
subspace $V_{\lambda_0}^\prime$ to $V_{\lambda_0}$ in $L^2(X,E)$ 
which is invariant under the 
closure $\bar P$ of $P$ in $L^2$ and the restriction of $(\bar P-\lambda_0\I)$ 
to $V_{\lambda_0}^\prime$ has a bounded inverse. The {\it algebraic multiplicity}
$m(\lambda_0)$ of $\lambda_0$ is defined as
\[
m(\lambda_0):=\dim V_{\lambda_0}.
\]
Moreover
$L^2(X,E)$ is the closure of the algebraic direct sum of finite-dimensional 
$P$-invariant subspaces $V_k$
\begin{equation}\label{directsum}
L^2(X,E)=\overline{\bigoplus_{k\ge 1} V_k}
\end{equation}
such that the restriction of $P$ to $V_k$ has a unique eigenvalue $\lambda_k$,
for each $k$ there exists $N_k\in\N$ such that $(P-\lambda_k\I)^{N_k}V_k=0$, 
and $|\lambda_k|\to\infty$.  
In general, the sum (\ref{directsum}) is not a sum
of mutually orthogonal subspaces. See \cite[Sect. 2]{Mu1} for details.

Recall that an angle $\theta\in[0,2\pi)$ is called an {\it Agmon angle} for 
$P$, if there exists $\varepsilon>0$ such that
\begin{equation}\label{agmon-angle}
\spectr(P)\cap \Lambda_{[\theta-\varepsilon,\theta+\varepsilon]}=\emptyset.
\end{equation}
By Lemma \ref{spec1} it is clear that an Agmon angle always exists for $P$.
Assume that $P$ is invertible. Choose an Agmon angle for $P$. Define the
complex power $P_\theta^{-s}$, $s\in\C$, as in \cite[\S10]{Shb}.
For $\Re(s)>n/2$, the complex power $P_\theta^{-s}$ is a
trace class operator and the zeta function $\zeta_\theta(s,P)$ of $P$ is 
defined by
\begin{equation}\label{zeta-fct}
\zeta_\theta(s,P):=\Tr(P_\theta^{-s}),\quad \Re(s)>\frac{n}{2}.
\end{equation}
The zeta function admits a meromorphic extension to the entire complex plane 
which is holomorphic at $s=0$ \cite[Theorem 13.1]{Shb}. Let $R_\theta:=
\{\rho e^{i\theta}\colon \rho\in\R^+\}$. 
Denote by $\log_\theta(\lambda)$ the branch of the logarithm in 
$\C\setminus R_\theta$ with $\theta<\Im\log_\theta <\theta+2\pi$. We enumerate the
eigenvalues of $P$ such that 
\[
\Re(\lambda_1)\le\Re(\lambda_2)\le \cdots\le\Re(\lambda_k)\le\cdots.
\] 
By Lidskii's
theorem \cite[Theorem 8.4]{GK} if follows that for $\Re(s)>n/2$ we have
\begin{equation}\label{lidskii}
\zeta_\theta(s,P)=\Tr(P_\theta^{-s})=\sum_{k=1}^\infty m(\lambda_k) (\lambda_k)_\theta^{-s},
\end{equation}
where $(\lambda_k)_\theta^{-s}=e^{-s\log_\theta(\lambda_k)}$. We will need a different
description of the zeta functionThe zeta function in terms of the heat 
operator $e^{-tP}$, which can be defined
using the functional calculus developed in \cite[Sect. 2]{Mu1} by 
\begin{equation}\label{heat-op}
e^{-tP}:=\frac{i}{2\pi}\int_\Gamma e^{-t\lambda^2}(P^{1/2}-\lambda)^{-1}d\lambda,
\end{equation}
where $\Gamma\subset\C$ is the same contour as in \cite[(2.18)]{Mu1}. As in
\cite[Lemma 2.4]{Mu1} one can show that $e^{-tP}$ is an integral operator with
a smooth kernel. By \cite[Prop. 2.5]{Mu1} it follows that $e^{-tP}$ is a trace
class operator. Using Lidskii's theorem as above we get
\begin{equation}\label{heat-trace}
\Tr(e^{-tP})=\sum_{k=1}^\infty m(\lambda_k) e^{-t\lambda_k}.
\end{equation}
The absolute convergence of the right hand side follows from Weyl's law
\cite[Lemma 2.2]{Mu1}. Assume that there exists $\delta>0$ such that
$\Re(\lambda_k)\ge\delta$ for all $k\in\N$. Then by \eqref{heat-trace} and
Weyl's law it follows that there exist $C,c>0$ such that
\begin{equation}\label{exp-decay}
|\Tr(e^{-tP})|\le C e^{-ct}
\end{equation}
for $t\ge 1$. Since $\spectr(P)$ is contained in the half plane $\Re(s)>0$,
we can choose the Agmon angle as $\theta=\pi$. Using the asymptotic 
expansion of $\Tr(e^{-tP})$ as $t\to 0$, it follows from \eqref{lidskii}
and \eqref{exp-decay} that 
\[
\zeta(s,P)=\frac{1}{\Gamma(s)}\int_0^\infty \Tr(e^{-tP})t^{s-1}dt
\]
for $\Re(s)>n/2$.

Then the regularized
determinant of $P$ is defined by
\begin{equation}\label{determ}
\det_\theta(P):=\exp\left(-\frac{d}{ds}\zeta_\theta(s,P)\Big|_{s=0}\right).
\end{equation} 
As shown in \cite[3.10]{BK1}, $\det_\theta(P)$ is independent of $\theta$.
Therefore we will denote the regularized determinant simply by $\det(P)$. 

Assume that the vector bundle $E$ is $\Z/2\Z$-graded, i.e., $E=E^+\oplus E^-$ 
and $P$ preserves the grading, i.e., assume that with respect to the 
decomposition
\[
C^\infty(Y,E)=C^+(Y,E^+)\oplus C^\infty(Y,E^-)
\]
$P$ takes the form 
\[
P=\begin{pmatrix}P^+&0\\0&P^-\end{pmatrix}. 
\]
Then we define the graded determinant $\det_\gr(P)$ of $P$ by
\begin{equation}\label{grdet}
\det_{\gr}(P)=\frac{\det(P^+)}{\det(P^-)}.
\end{equation}

Next we introduce the analytic torsion defined in terms of the 
non-selfadjoint operators $\Delta_{p,\chi}^\sharp$. We use the definition given in 
\cite[section 8]{CM}.
Recall that the principal symbol of $\Delta_{p,\chi}^\sharp$ is given by 
\eqref{prin-symp}. Therefore, $\Delta_{p,\chi}^\sharp$ satisfies the assumptions
of section \ref{sec-det}.

 Let $r>0$ be such that $\Re(\lambda)\neq r$ for
all generalized eigenvalues $\lambda$ of $\Delta_{p,\chi}^\sharp$. Let 
$\Pi_{p,r}$ be the spectral projection on the span of the generalized 
eigenvectors with eigenvalues with real part less than $r$. Let
$\Delta_{p,\chi,r}^\sharp:=(1-\Pi_{p,r})\Delta_{p,\chi}^\sharp$. Let $S(p,\chi,r)$ be
the set of all nonzero generalized eigenvalues with real part less than $r$. 
Furthermore, let $V^p_0$ be the generalized eigenspace of $\Delta^\sharp_{p,\chi}$
with generalized eigenvalues $0$. Then $(V_0^\ast,d,d^{\ast,\sharp})$ is double
complex in the sense of \cite{CM}. Let
\begin{equation}\label{double}
T_0(X,\chi)\in(\det H^\ast(X,F_\chi))\otimes(\det H_\ast(X,F_\chi))^\ast
\end{equation} 
be the torsion of the double complex. Then the Cappell-Miller torsion  is
defined by
\begin{equation}\label{anal-tor}
T^\C(X,\chi):=\prod_{p=1}^d\det(\Delta_{p,\chi,r}^\sharp)^{(-1)^{p+1}p}\cdot
\prod_{p=1}^d\Bigl(\prod_{\lambda\in S(p,\chi,r)}\lambda^{m(\lambda)}\Bigr)^{(-1)^{p+1}p}\cdot 
T_0(X,\chi),
\end{equation}
where $m(\lambda)$ denotes the algebraic multiplicity of $\lambda$. 
Let $\Pi_{k,0}$ be the spectral projection on the generalized eigenspace of
$\Delta_{k,\chi}^\sharp$ with generalized eigenvalue $0$. Let 
\[
(\Delta_{k,\chi}^\sharp)^\prime:=(\Id-\Pi_{k,0})\Delta_{k,\chi}^\sharp.
\]
If we choose an Agmon angle we can also write
\begin{equation}\label{anal-tor1}
T^\C(X,\chi)=\prod_{k=1}^d\left[\det(\Delta_{k,\chi}^\sharp)^\prime\right]^{(-1)^{k+1}k}
\cdot T_0(X,\chi).
\end{equation}
If $\chi$ is acyclic, i.e., $H^\ast(X,E_\chi)=0$, then $T_0(X,\chi)$ and 
$T^\C(X,\chi)$ are complex numbers.

\section{Twisted Ruelle and Selberg zeta functions}
\setcounter{equation}{0}
In this section we consider compact oriented hyperbolic manifolds of odd 
dimension $d=2n+1$ and we recall some basic facts about Ruelle and Selberg 
type zeta functions. 

We need a  more general class of Ruelle zeta functions than the one defined
by \eqref{ruelle-zeta}.
To begin with we fix some notation. Let $G=\SO_0(d,1)$ and $K=\SO(d)$. 
Then $G/K$ equipped with the normalized invariant metric is isometric
to the $d$-dimensional hyperbolic space $\bH^d$. Let
$G=KAN$ be the standard Iwasawa decompositon. Let $M$ be the centralizer of
$A$ in $K$. Then $M\cong\SO(d-1)$. Denote by $\gf$, $\kf$, $\mf$, $\nf$, and 
$\af$
the Lie algebras of $G$, $K$, $M$, $N$, and $A$, respectively. Let $W(A)\cong
\Z/2\Z$ be the Weyl group of $(\gf,\af)$. 

Let $\Gamma\subset G$ be a discrete, torsion free, cocompact subgroup. Then
$\Gamma$ acts fixed point free on $\bH^d$. The quotient $X=\Gamma\bs\bH^n$ 
is a closed, oriented hyperbolic manifold and each such
manifold is of this form. Given $\gamma\in \Gamma$,
we denote by $[\gamma]$ the $\Gamma$-conjugacy class of $\gamma$. The set of 
all
conjugacy classes of $\Gamma$ will be denoted by $C(\Gamma)$. Let 
$\gamma\not=1$. Then there exist $g\in G$, $m_\gamma\in M$, and 
$a_\gamma\in A^+$ such that
\begin{equation}\label{conjug}
g\gamma g^{-1}=m_\gamma a_\gamma.
\end{equation}
By \cite[Lemma 6.6]{Wa}, $a_\gamma$ depends only on $\gamma$ and $m_\gamma$ 
is determined up to conjugacy in $M$.  By definition
there exists $\ell(\gamma)>0$  such that
\begin{equation}\label{length}
a_\gamma=\exp\left(\ell(\gamma)H\right).
\end{equation}
Then $\ell(\gamma)$ is the length of the unique closed geodesic in $X$ 
that corresponds to the conjugacy class $[\gamma]$. An element 
$\gamma\in\Gamma-\{e\}$ is called primitive, if it can not be written as 
$\gamma=\gamma_0^k$ for some $\gamma_0\in\Gamma$ and $k>1$. For every 
$\gamma \in\Gamma-\{e\}$ there exist a unique primitive element $\gamma_0\in
\Gamma$ and $n_\Gamma(\gamma)\in\N$ such that $\gamma=\gamma_0^{n_\Gamma(\gamma)}$.
We recall that for $R>0$ we have
\begin{equation}\label{conj-est}
\#\left\{[\gamma]\in C(\Gamma)\colon \ell(\gamma)\le R\right\}\ll e^{(n-1)R}
\end{equation}
\cite[(1.31)]{BO}. We also need the following auxiliary lemma.
\begin{lem}\label{repr-estim}
Let $\chi\colon\Gamma\to\GL(V)$ be a finite dimensional representation of
$\Gamma$. There exist $C,c>0$ such that
\begin{equation}\label{trace-est}
|\tr(\chi(\gamma))|\le C e^{c\ell(\gamma)},\quad \forall\gamma\in\Gamma-\{e\}.
\end{equation}
\end{lem}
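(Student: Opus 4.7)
The plan is to combine sub-multiplicativity of $\chi$ on the word metric of $\Gamma$ with the Milnor--Schwarz lemma, exploiting the fact that $\tr\circ\chi$ is a class function. As a cocompact lattice in a Lie group, $\Gamma$ is finitely generated; fix a finite symmetric generating set $S$ and any norm on $V$. Setting $M_0:=\max_{s\in S}\|\chi(s)\|$, sub-multiplicativity gives $\|\chi(\gamma)\|\le M_0^{|\gamma|_S}$ for the word length $|\gamma|_S$, and hence $|\tr(\chi(\gamma))|\le(\dim V)\,M_0^{|\gamma|_S}$. The task therefore reduces to bounding $|\gamma|_S$ by a linear function of $\ell(\gamma)$.

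Because $\tr\circ\chi$ is conjugation invariant on $\Gamma$, we are free to replace $\gamma$ by any $\Gamma$-conjugate $\gamma':=\gamma_1\gamma\gamma_1^{-1}$. The translation length $\inf_{x\in\bH^d}d(x,\gamma x)$ of the hyperbolic element $\gamma\in\SO_0(d,1)$ equals $\ell(\gamma)$ as defined in \eqref{conjug}--\eqref{length}, and the infimum is attained at every point $p$ on the $\gamma$-invariant geodesic axis. Fix a basepoint $x_0\in\bH^d$ and a compact Dirichlet fundamental domain $F\ni x_0$ of diameter $D$. Using cocompactness I pick $\gamma_1\in\Gamma$ with $\gamma_1 p\in F$; then, using that $\gamma'$ is an isometry and $d(\gamma_1 p,\gamma'\gamma_1 p)=d(p,\gamma p)=\ell(\gamma)$,
\[
d(x_0,\gamma' x_0)\le d(x_0,\gamma_1 p)+d(\gamma_1 p,\gamma'\gamma_1 p)+d(\gamma'\gamma_1 p,\gamma' x_0)\le 2D+\ell(\gamma).
\]

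Finally I would invoke the Milnor--Schwarz lemma for the proper cocompact isometric action of $\Gamma$ on $\bH^d$: the orbit map $\gamma\mapsto\gamma x_0$ is a quasi-isometry between $(\Gamma,d_S)$ and $(\bH^d,d)$, so there are constants $A,B>0$ with $|\gamma'|_S\le A\,d(x_0,\gamma' x_0)+B$ for every $\gamma'\in\Gamma$. Chaining the three inequalities yields
\[
|\tr(\chi(\gamma))|\le(\dim V)\,M_0^{A(\ell(\gamma)+2D)+B},
\]
which is of the claimed form $Ce^{c\ell(\gamma)}$ with $c=A\log M_0$. I expect no serious obstacle; the only point requiring care is the identification of the hyperbolic translation length of $\gamma$ on $\bH^d$ with $\ell(\gamma)$, which is immediate from the Cartan-projection description \eqref{conjug}--\eqref{length} together with the fact that the $\gamma$-invariant axis projects to the closed geodesic $\tau_\gamma$.
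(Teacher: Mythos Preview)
Your argument is correct. The paper does not give its own proof of this lemma but simply refers to \cite[Lemma 3.3]{Sp1}, so there is no in-paper argument to compare against; your proof via sub-multiplicativity of the operator norm, conjugation invariance of the trace, and the Milnor--Schwarz lemma is a clean, self-contained substitute. One cosmetic point: since $S$ is symmetric and $\|\chi(s)\|\,\|\chi(s^{-1})\|\ge\|\Id\|=1$, you automatically have $M_0\ge 1$, so $c=A\log M_0\ge 0$; in the borderline case $M_0=1$ the trace is uniformly bounded and any $c>0$ works.
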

For the proof see \cite[Lemma 3.3]{Sp1}. Let $\theta\colon\gf\to\gf$ be the
Cartan involution with respect to $\kf$. 
Let $\bar\nf=\theta\nf$ be the negative root space. Let $\chi\colon \Gamma\to
\GL(V)$ be a finite dimensional complex representation. For $\sigma\in\widehat
M$ and $s\in\C$ with $\Re(s)\gg 0$ the twisted Selberg zeta function is defined 
by
\begin{equation}\label{selb-zeta}
Z(s;\sigma,\chi):=\prod_{\substack{[\gamma]\not=e\\ [\gamma]\,\pr}}\prod_{k=0}^\infty
\det\left(1-\left(\chi(\gamma)\otimes\sigma(m_\gamma)\otimes S^k
\left(\Ad(m_\gamma a_\gamma)_{\overline\nf}\right)\right)
e^{-(s+\|\rho\|)\ell(\gamma)}\right),
\end{equation}
where $[\gamma]$ runs over the primitive $\Gamma$-conjugacy classes and
$S^k\left(\Ad(m_\gamma a_\gamma)_{\overline\nf}\right)$ denotes the $k$-th
symmetric power of the adjoint map $\Ad(m_\gamma a_\gamma)$ restricted to 
$\bar\nf$. It follows from \eqref{conj-est} and \eqref{trace-est} that there
exists $C>0$ such that the product converges absolutely and uniformly on campact
subsets of the half-plane $\Re(s)>C$. See \cite[Prop 3.4]{Sp1}. In the same way
the twisted Ruelle zeta function $R(s;\sigma,\chi)$ is defined by
\begin{equation}\label{ruelle-zeta}
R(s;\sigma,\chi):=\prod_{\substack{[\gamma]\not=e\\ [\gamma]\,\pr}}
\det\left(1-\left(\chi(\gamma)\otimes\sigma(m_\gamma)\right)
e^{-(s+|\rho|)\ell(\gamma)}\right).
\end{equation}
By \cite[Prop. 3.5]{Sp1} the product converges absolutely and uniformly
in some half-plane $\Re(s)>C$. Furthermore, $Z(s;\sigma,\chi)$ and
$R(s;\sigma,\chi)$ admit meromorphic extensions to the entire complex plane
\cite{Sp1} and satisfy functional equations \cite{Sp2}. For unitary 
representations $\chi$, these results were proved by Bunke and Olbrich 
\cite{BO}. The main technical tool is the Selberg trace formula. For the 
extension to the non-unitary case the Selberg trace formula is replaced
by a Selberg trace formula for non-unitary twists, developed in \cite{Mu1}.
The proofs are similar except that on has to deal with non-self-adjoint 
operators.

There are also expressions of the zeta functions in terms of determinants of
certain elliptic operators. To explain the formulas we need to recall the
definition of the relevant differential operators. Given $\tau\in\widehat K$,
let $\wt E_\tau\to\wt X$ be the homogeneous vector bundle associated to $\tau$
and let $E_\tau:=\Gamma\bs\wt E_\tau$ be the corresponding locally homogeneous
vector bundle over $X$. Denote by $C^\infty(X,E_\tau)$ the space of smooth
sections of $E_\tau$. There is a canonical isomorphism
\begin{equation}\label{iso-sect}
C^\infty(X,E_\tau)\cong (C^\infty(\Gamma\bs G)\otimes V_\tau)^K
\end{equation}
\cite[\S 1]{Mia}.
Let $\Omega\in{\mathcal Z}(\gf_\C)$ be the Casimir element and denote by 
$R_\Gamma$ the right regular representation of $G$ in $C^\infty(\Gamma\bs G)$.
Then $R_\Gamma(\Omega)$ acts on the right hand side of \eqref{iso-sect} and
via this isomorphism, defines an operator in $C^\infty(X,E_\tau)$. We denote
the operator induced by $-R_\Gamma(\Omega)$ by $A_\tau$.

Denote by $\wt\nabla^\tau$ the canonical connection in
$\wt E_\tau$ and let $\nabla^\tau$ be the induced connection in $E_\tau$. Let
$\Delta_\tau:=(\nabla^\tau)^\ast\nabla^\tau$ be the associated Bochner-Laplace 
operator acting in $C^\infty(X,E_\tau)$  .
Let $\Omega_K\in{\mathcal Z}(\kf_\C)$ be the Casimir element of $K$. Assume
that $\tau$ is irreducible. 
Let $\lambda_\tau:=\tau(\Omega_K)$ denote the Casimir eigenvalue of $\tau$.
Then we have 
\begin{equation}
A_\tau:=\Delta_\tau-\lambda_\tau\Id.
\end{equation}
\cite[\S 1]{Mia}.
Thus $A_\tau$ is a formally self-adjoint second order elliptic differential
operator. Let $F_\chi\to X$ be the flat vector bundle defined by $\chi$. Let 
\[
A^\sharp_{\tau,\chi}\colon C^\infty(X,E_\tau\otimes F_\chi)\to 
C^\infty(X,E_\tau\otimes F_\chi)
\]
be the coupling of $A_\tau$ to $F_\chi$.

Denote by $R(K)$ and $R(M)$ 
the representation rings of $K$ and $M$, respectively. Let $i\colon M\to K$
be the inclusion and $i^\ast\colon R(K)\to R(M)$ the induced map of the 
representation rings. The Weyl group $W(A)$ acts on $R(M)$ in the canonical 
way. Let $R^\pm(M)$ denote the $\pm1$-eigenspaces of the non-trivial element
$w\in W(A)$. Let $\sigma\in R(M)$. It follows from the proof of 
Proposition 1.1 in \cite{BO} that there exist $m_\tau(\sigma)\in\{-1,0,1\}$,
depending on $\tau\in \widehat K$, which are equal to zero except for finitely
many $\tau\in\widetilde K$, such that
\begin{equation}
\sigma=\sum_{\tau\in\widetilde K}m_\tau(\sigma) i^\ast(\tau), 
\end{equation}
if $\sigma\in R^+(M)$, and
\begin{equation}
\sigma+w\sigma=\sum_{\tau\in\widetilde K}m_\tau(\sigma) i^\ast(\tau), 
\end{equation}
if $\sigma\neq w\sigma$. 
Let
\begin{equation}\label{sigma-vb}
E(\sigma):=\bigoplus_{\substack{\tau\in\widetilde K\\ m_\tau(\sigma)=\neq0}} E_\tau.
\end{equation}
Then $E(\sigma)$ has a grading
\[
E(\sigma)=E^+(\sigma)\oplus E^-(\sigma)
\]
defined by the sign of $m_\tau(\sigma)$. 
Let $\sigma\in\widehat M$. Denote by $\nu_\sigma$ the highest weight of 
$\sigma$. Let ${\mathfrak b}$ be the standard Cartan subalgebra of $\mf$
\cite[Sect. 2]{MP1}. 
Let $\rho_{\mf}$ be the half-sum of positive roots of 
$(\mf_\C,{\mathfrak b}_\C)$. Put
\begin{equation}\label{c-sigma}
c(\sigma):=-\|\rho\|^2-\|\rho_{\mf}\|^2+\|\nu_\sigma+\rho_{\mf}\|^2.
\end{equation}
We define the operator $A^\sharp_{\chi}(\sigma)$ acting in 
$C^\infty(X,E(\sigma)\otimes F_\chi)$ by
\begin{equation}\label{oper1}
A^\sharp_{\chi}(\sigma):=\bigoplus_{\substack{\tau\in\widetilde K\\ m_\tau(\sigma)\neq0}}
A^\sharp_{\tau,\chi}+c(\sigma).
\end{equation}
For $p\in\{0,...,d-1\}$ let $\sigma_p$, be the standard representation of
$M=\SO(d-1)$ on $\Lambda^p\R^{d-1}\otimes\C$. Put
\begin{equation}\label{oper2}
A_\chi^\sharp(\sigma_p\otimes\sigma):=\bigoplus_{[\sigma^\prime]\in\widehat M/W}
\bigoplus_{i=1}^{[(\sigma_p\otimes\sigma)\colon\sigma^\prime]} 
A_\chi^\sharp(\sigma^\prime),
\end{equation}
Recall that $A_\chi^\sharp(\sigma_p\otimes\sigma)$ acts in the space of sections 
of a graded vector bundle.
Then by
\cite[Prop. 1.7]{Sp2} we have the following determinant formula.
\begin{prop}\label{prop-det}
For every $\sigma\in\widehat M$  one has
\begin{equation}\label{det-form1}
\begin{split}
R(s;\sigma,\chi)=\prod_{p=0}^{d-1} & \det_{\gr}\left(A_\chi^\sharp(\sigma_p\otimes\sigma)+
(s+n-p)^2\right)^{(-1)^p}\\
&\cdot\exp\left(-\frac{2\pi(n+1)\dim(V_\chi)\dim(V_\sigma)\vol(X)}{\vol(S^d)}s
\right),
\end{split}
\end{equation}
if $\sigma$ is Weyl-invariant, and
\begin{equation}\label{det-form2}
\begin{split}
R(s;\sigma,\chi)R(s;w\sigma,\chi)=\prod_{p=0}^{d-1} &\det_{\gr}\left(A_\chi^\sharp(\sigma_p\otimes\sigma)+(s+n-p)^2\right)^{(-1)^p}\\
&\cdot\exp\left(-\frac{4\pi(n+1)\dim(V_\chi)\dim(V_\sigma)\vol(X)}{\vol(S^d)}s\right),
\end{split}
\end{equation}
otherwise. Here $\vol(S^d)$ denotes the volume of the $d$-dimensional unit 
sphere.
\end{prop}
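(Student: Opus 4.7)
The plan is to combine two ingredients: an Euler-product identity expressing the twisted Ruelle zeta as an alternating product of twisted Selberg zeta functions, and a determinant formula for the Selberg zeta function obtained from the Selberg trace formula for non-unitary twists of \cite{Mu1}.

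For the first ingredient, I would take logarithms in the Euler products \eqref{ruelle-zeta} and \eqref{selb-zeta}. This yields
\[
-\log R(s;\sigma,\chi)=\sum_{[\gamma]\,\pr}\sum_{k\ge1}\frac{1}{k}\tr(\chi(\gamma^k))\tr(\sigma(m_\gamma^k))\,e^{-(s+\|\rho\|)k\ell(\gamma)},
\]
while the analogous expansion of $-\log Z(s;\sigma,\chi)$ differs by an extra factor $\det(\Id-\Ad(m_\gamma a_\gamma)^k|_{\overline\nf})^{-1}$ coming from the sum over symmetric powers. Since $\Ad(m_\gamma a_\gamma)|_{\overline\nf}=e^{-\ell(\gamma)}\sigma_1(m_\gamma)$, where $\sigma_1$ is the standard representation of $M=\SO(d-1)$ on $\overline\nf\cong\R^{d-1}$, and $\sigma_p=\Lambda^p\sigma_1$, the identity $\det(\Id-A)=\sum_{p=0}^{d-1}(-1)^p\tr(\Lambda^pA)$ converts this extra factor into an alternating combination, producing
\[
R(s;\sigma,\chi)=\prod_{p=0}^{d-1}Z(s+p;\sigma_p\otimes\sigma,\chi)^{(-1)^p}
\]
when $\sigma$ is Weyl-invariant, together with a parallel identity for the product $R(s;\sigma,\chi)R(s;w\sigma,\chi)$ when $\sigma\ne w\sigma$.

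For the second ingredient, I would apply the Selberg trace formula for non-unitary twists \cite{Mu1} to the heat semigroup $e^{-tA_\chi^\sharp(\tau)}$, whose construction and trace-class property follow from the spectral framework of Section \ref{sec-det} (in particular from \eqref{heat-op}, Lidskii's identity \eqref{lidskii}, and the exponential decay \eqref{exp-decay}). Comparing the spectral and geometric sides of the trace formula, then performing a Mellin transform and passing to $s=0$, yields a determinant formula of the shape
\[
Z(u;\tau,\chi)=\det\!\bigl(A_\chi^\sharp(\tau)+(u-\|\rho\|)^2\bigr)\cdot\exp\!\bigl(P_{\tau,\chi}(u)\bigr),
\]
where $P_{\tau,\chi}(u)$ is an odd polynomial whose coefficients are determined by the Plancherel density of the principal series of $\SO_0(d,1)$. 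Substituting $u=s+p$ into this formula and inserting into the alternating product from the first step gives, on the spectral side, the graded-determinant product appearing in \eqref{det-form1}: the quadratic shift $(u-\|\rho\|)^2$ becomes $(s+n-p)^2$ after using $\|\rho\|=n$ and the reindexing $p\leftrightarrow d-1-p$ compatible with the isomorphism $\sigma_p\cong\sigma_{d-1-p}$ of $M$-representations, and the graded structure of $A_\chi^\sharp(\sigma_p\otimes\sigma)$ arises through \eqref{oper1}--\eqref{oper2} and the $\pm$-decomposition \eqref{sigma-vb} coming from $m_\tau(\sigma)\in\{-1,0,1\}$. The alternating sum of the polynomials $P_{\sigma_p\otimes\sigma,\chi}(s+p)$ over $p$ then collapses: a Koszul-type binomial identity wipes out all monomials of degree $\ge 2$, leaving a linear polynomial in $s$ whose coefficient is the required $-2\pi(n+1)\dim(V_\chi)\dim(V_\sigma)\vol(X)/\vol(S^d)$ (doubled in the non-Weyl-invariant case, matching the product $R(s;\sigma,\chi)R(s;w\sigma,\chi)$ in \eqref{det-form2}).

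The main obstacle is the derivation of the Selberg determinant formula in the non-unitary setting: because $A_\chi^\sharp(\tau)$ is not self-adjoint, one cannot appeal to standard Plancherel or spectral-theorem arguments, and must instead carefully justify Mellin-transform manipulations, contour shifts, and interchanges of summation and integration in the presence of the generalized eigenspaces described by \eqref{project}--\eqref{directsum}. This requires delicate uniform resolvent and heat-kernel estimates from \cite{Mu1} beyond what the self-adjoint theory of \cite{BO} provides. A secondary, more combinatorial, obstacle is the explicit evaluation of $P_{\tau,\chi}(u)$ and the verification that its alternating sum collapses to a linear polynomial with the precise constant given, which relies on explicit Plancherel-density formulas for $\SO_0(d,1)$ and on controlling the weight combinatorics of $\sigma_p\otimes\sigma$.
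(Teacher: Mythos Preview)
The paper does not actually prove this proposition in the text: it merely cites \cite[Prop.~1.7]{Sp2} for the general statement and remarks that the unitary case is \cite[Prop.~4.6]{BO}. Your outline is exactly the strategy carried out in those references---factorize $R(s;\sigma,\chi)$ as an alternating product of Selberg zeta functions via the Koszul identity $\det(\Id-\Ad(m_\gamma a_\gamma)|_{\bar\nf})=\sum_{p}(-1)^p\tr\sigma_p(m_\gamma)e^{-p\ell(\gamma)}$, invoke the determinant formula for each $Z(s;\tau,\chi)$ obtained from the trace formula of \cite{Mu1}, and verify that the alternating sum of Plancherel polynomials collapses to the linear exponential---so your proposal is correct and matches the cited approach.

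One minor bookkeeping caution: the paper's Euler product \eqref{ruelle-zeta} for $R(s;\sigma,\chi)$ already carries the shift $e^{-(s+|\rho|)\ell(\gamma)}$ rather than $e^{-s\ell(\gamma)}$, so when you match exponents after taking logarithms the factorization reads $R(s;\sigma,\chi)=\prod_{p}Z(s-|\rho|+p;\sigma_p\otimes\sigma,\chi)^{(-1)^p}$ in the paper's normalization; tracking this shift is what makes the reindexing $p\leftrightarrow d-1-p$ come out cleanly as $(s+n-p)^2$ rather than $(s-n+p)^2$.
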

For unitary $\chi$ this was proved in \cite[Prop. 4.6]{BO}.

\section{Proof of the main theorem}\label{sec-origin}
\setcounter{equation}{0}

To prove Theorem \ref{theo-sing} we apply Proposition \ref{prop-det} for the 
case $\sigma=1$.
Let $\hf=\af\oplus\bfr$ be the standard Cartan subalgebra of $\gf$.  Let 
$e_1,...,e_{n+1}\in\hf^\ast_\C$ be the 
standard basis \cite[Sect. 2]{MP1}. Thus $e_2,...,e_{n+1}$ is a basis of 
$\bfr$. Then $\rho_{\mf}=\sum_{j=2}^{n+1}(n+1-j)e_j$ and 
\[
\nu_p=\begin{cases} e_2+\cdots+e_{p+1},& \text{if}\;\;p\le n,\\
e_2+\cdots+e_{2n+1-p},&\text{if}\;\; p>n,
\end{cases}
\]
\cite[Chap. IV, \S 7]{Kn}.
 Moreover, $|\rho|=n$. An explicit computation shows
that $c(\sigma_p)=-(n-p)^2$.  
Let $\lambda_p$ be the $p$-th exterior power of the standard representation
of $\SO(d)$ . Then for $p=0,...,d-1$ we have $i^\ast(\lambda_p)=\sigma_p+
\sigma_{p-1}$. Put $\tau_p:=\sum_{k=0}^p (-1)^k\lambda_{p-k}$. Then it follows that
$i^\ast(\tau_p)=\sigma_p$, $p=0,...,d-1$. Using \eqref{oper1} and \eqref{oper2},
we obtain
\begin{equation}\label{opera3}
A^\sharp_\chi(\sigma_p)+(n-p)^2=\bigoplus_{k=0}^p A^\sharp_{\lambda_k,\chi}.
\end{equation}
Now recall that $A^\sharp_{\lambda_k,\chi}$ is the coupling of $A_{\lambda_k}$ to 
$F_\chi$. Furthermore, with respect to the isomorphism  \eqref{iso-sect}, 
$A_{\lambda_k}$ corresponds to the action of $-R_\Gamma(\Omega)$ on
$(C^\infty(\Gamma\bs G)\otimes \Lambda^k\C^d)$. By the Lemma of Kuga, this
operator corresponds to the Laplacian $\Delta_k$ on $\Lambda^k(X)$. Let
$\Delta^\sharp_{k,\chi}$ be the coupling of $\Delta_k$ to $F_\chi$. Then by
\eqref{opera3} we get
\begin{equation}
A^\sharp_\chi(\sigma_p)+(n-p)^2=\bigoplus_{k=0}^p \Delta^\sharp_{k,\chi}.
\end{equation}
Using \eqref{det-form1} we obtain
\begin{equation}\label{det-form3}
\begin{split}
R(s;\chi)&=\prod_{p=0}^{d-1}\det_{\gr}(A^\sharp_\chi(\sigma_p)+(s+n-p)^2)^{(-1)^p}\\
&=\prod_{p=0}^{d-1}\prod_{k=0}^p\det(\Delta^\sharp_{p-k,\chi}+s(s+2(n-p)))^{(-1)^{p+k}}\\
&=\prod_{k=0}^{d-1}\prod_{p=k}^{d-1}\det(\Delta^\sharp_{k,\chi}+s(s+2(n-p)))^{(-1)^{k}}.
\end{split}
\end{equation}
Let $h_k$ be the dimension of the generalized eigenspace of 
$\Delta^\sharp_{k,\chi}$ with eigenvalue zero.
\begin{lem}\label{kernel}
We have $h_p=h_{d-p}$ for $p=0,...,d$.
\end{lem}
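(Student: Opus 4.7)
The plan is to prove a Poincaré--Hodge duality for the non-self-adjoint flat Laplacian $\Delta^\sharp_{p,\chi}$, using the Hodge star operator coupled trivially to $F_\chi$. This works because the definition of $d^{\ast,\sharp}_F$ recalled in Section~\ref{sec-coupling} is already expressed in terms of $\star \otimes \Id_F$, so one does not need any metric on $F_\chi$.

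First I would introduce the bundle isomorphism
\[
\tau_p := \star \otimes \Id_{F_\chi} \colon \Lambda^p(X,F_\chi) \longrightarrow \Lambda^{d-p}(X,F_\chi),
\]
which is a $C^\infty(X)$-linear isomorphism satisfying $\tau_{d-p}\tau_p = (-1)^{p(d-p)}\Id$. From the identity
\[
d^{\ast,\sharp}_F = (-1)^{np+n+1}\,(\star \otimes \Id_F)\circ d_F\circ(\star \otimes \Id_F)
\]
recalled in Section~\ref{sec-coupling}, I would derive by direct manipulation two intertwining relations of the form
\[
\tau_{p+1}\circ d_F = \varepsilon_p\, d^{\ast,\sharp}_F\circ \tau_p, \qquad
\tau_{p-1}\circ d^{\ast,\sharp}_F = \varepsilon'_p\, d_F\circ \tau_p,
\]
with explicit signs $\varepsilon_p,\varepsilon'_p \in \{\pm 1\}$ depending only on $p$ and $d$.

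Next I would substitute these into $\Delta^\sharp_{p,\chi} = d_F d^{\ast,\sharp}_F + d^{\ast,\sharp}_F d_F$ and compute
\[
\tau_p \circ \Delta^\sharp_{p,\chi} = \Delta^\sharp_{d-p,\chi}\circ \tau_p,
\]
the crucial point being that each of the two summands receives a pair of signs whose product equals $\varepsilon_p\varepsilon'_p$ in both orders, so the cross-signs cancel in the end. Hence $\tau_p$ conjugates $\Delta^\sharp_{p,\chi}$ to $\Delta^\sharp_{d-p,\chi}$.

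Finally, since $\tau_p$ is a linear isomorphism, it commutes with the Riesz projector construction of \eqref{project}: applying $\tau_p$ to the contour integral defining $\Pi_{p,0}$ gives $\tau_p \circ \Pi_{p,0} = \Pi_{d-p,0}\circ \tau_p$. Therefore $\tau_p$ restricts to a linear isomorphism $V^p_0 \xrightarrow{\sim} V^{d-p}_0$, yielding $h_p = h_{d-p}$. The only real bookkeeping obstacle is keeping the parity signs $\varepsilon_p,\varepsilon'_p$ straight; however, because the final identity involves the square of $d_F + d^{\ast,\sharp}_F$, these signs are forced to cancel, just as they do in the classical Hodge-theoretic argument for the self-adjoint Laplacian.
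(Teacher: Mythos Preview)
Your proposal is correct and follows essentially the same approach as the paper: use the extended Hodge star $\star\otimes\Id_{F_\chi}$ to intertwine $\Delta^\sharp_{p,\chi}$ with $\Delta^\sharp_{d-p,\chi}$, whence the generalized zero-eigenspaces are isomorphic. The paper's argument is slightly more economical because it invokes the classical relation $\star\Delta_p=\Delta_{d-p}\star$ and notes that the flat coupling (defined locally via flat frames) preserves it, thereby avoiding the separate sign bookkeeping for $d_F$ and $d^{\ast,\sharp}_F$ that you carry out explicitly.
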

\begin{proof}
Let $\star\colon \Lambda^p(X,F_\chi)\to \Lambda^{d-p}(X,F_\chi)$ be the extension 
of the Hodge $\star$-star operator, which acts locally as 
$\star(\omega\otimes f)
=(\star\omega)\otimes f$, where $\omega$ is a usual $p$-form and $f$ a local
section of $F_\chi$. Since $\star\Delta_p=\Delta_{d-p}\star$, it follows from the
definition of the Laplacians coupled to $F_\chi$ that $\star\Delta^\sharp_{p,\chi}
=\Delta^\sharp_{d-p,\chi}\star$. It follows that for every $k\in\N$ we have
$\star(\Delta^\sharp_{p,\chi})^k=(\Delta^\sharp_{d-p,\chi})^k\star$.This proves the 
lemma.
\end{proof}
Denote by $h$ the order of the singularity of $R(s,\chi)$ at $s=0$. 
Using \eqref{det-form3} and Lemma \ref{kernel} it follows that 
\begin{equation}\label{order-sing}
h=\sum_{k=0}^{d-1}(d+1-k)(-1)^kh_k=\sum_{k=0}^n(d+1-2k)(-1)^kh_k.
\end{equation}

Let $\Pi_{k,0}$ be the spectral projection on the generalize eigenspace of 
$\Delta_{k,\chi}^\sharp$ with eigenvalue $0$. Let $(\Delta_{k,\chi}^\sharp)^\prime
:=(\Id-\Pi_{k,0})\Delta_{k,\chi}^\sharp$. We note that for $s\in\C$, $|s|\ll1$,
there is a common Agmon angle for the operator 
$(\Delta_{k,\chi}^\sharp)^\prime+s(s+2(n-p))$.
Therefore, in order to study the limit of 
$\det((\Delta_{k,\chi}^\sharp)^\prime+s(s+2(n-p)))$ as $s\to0$, we can use one
and the same Agmon angle.

 If $p\neq n$, we get
\begin{equation}\label{limit1}
\begin{split}
\lim_{s\to0} s^{-h_k}\det(\Delta^\sharp_{k,\chi}+s(s+2(n-p)))&=\lim_{s\to0}
\Bigl[\det((\Delta^\sharp_{k,\chi})^\prime+s(s+2(n-p))\\
&\hskip3.5truecm\cdot\frac{(s(s+2(n-p)))^{h_k}}{s^{h_k}}\Bigr]\\
&=(2(n-p))^{h_k}\cdot\det ((\Delta^\sharp_{k,\chi})^\prime).
\end{split}
\end{equation}
For $p=n$ we get a similar formula 
\begin{equation}\label{limt2}
\lim_{s\to0} s^{-2h_k}\det(\Delta^\sharp_{k,\chi}+s^2)=\lim_{s\to0}q
\det((\Delta^\sharp_{k,\chi})^\prime+s^2)
=\det ((\Delta^\sharp_{k,\chi})^\prime).
\end{equation}
Let
\begin{equation}\label{const}
C(d,\chi):=\prod_{k=0}^{d-1}\prod_{p=k}^{d-1} \left(2(n-p)\right)^{(-1)^kh_k}.
\end{equation}
Using \eqref{order-sing}, \eqref{limit1} and \eqref{limt2} we get
\begin{equation}\label{ruelle-zero}
\begin{split}
\lim_{s\to0} s^{-h}R(s;\chi)&=\prod_{k=0}^{d-1}
\prod_{\substack{p=k\\p\neq n }}^{d-1}
\lim_{s\to0}\left[s^{-h_k}\det(\Delta^\sharp_{k,\chi}+s(s+2(n-p)))\right]^{(-1)^{k}}\\
&\hskip3.5truecm\cdot \prod_{k=0}^n\lim_{s\to0}
\left[s^{-2h_k}\det(\Delta^\sharp_{k,\chi}+s^2)\right]^{(-1)^k}\\
&=C(d,\chi)\cdot\prod_{k=0}^{d-1}\det((\Delta_{k,\chi}^\sharp)^\prime)^{(d-k)(-1)^k}
=C(d,\chi)\cdot\prod_{k=1}^d\det((\Delta_{k,\chi}^\sharp)^\prime)^{k(-1)^{k+1}}.
\end{split}
\end{equation}
For the last equality we used that $\Delta_{k,\chi}^\sharp\cong
\Delta_{d-k,\chi}^\sharp$.
Let $T_0(X,\chi)$ be the torsion \eqref{double} of the double complex 
$(V_0^\ast,d,d^{\ast,\sharp}$ and
$T^\C(X,\chi)$ the Cappell-Miller torsion defined by \eqref{anal-tor}.
We note that $T^\C(X,\chi)$ and $T_0(X,\chi)$ are both non-zero elements of the
determinant line $\det H^\ast(X,F_\chi)\otimes(\det H_\ast(X,F_\chi))^\ast$. 
Hence there exists $\lambda\in\C$ with $T^\C(X,\chi)=\lambda T_0(X,\chi)$.
Set
\[
\frac{T^\C(X,\chi)}{T_0(X,\chi)}:=\lambda.
\]
If we combine this convention with the definition of the Cappell-Miller 
torsion \eqref{anal-tor1}, then \eqref{ruelle-zero} implies Theorem 
\ref{theo-sing}.

\section{Acyclic representations}\label{sec-acyclic}
\setcounter{equation}{0}

In this section we assume that $\chi$ is acyclic. Then
$T^\C(X,\chi)$, $T_0(X,\chi)$ and $\tau_{\comb}(X,\chi)$ are complex numbers and 
the right hand side of \eqref{origin} is the quotient of the two complex 
numbers. Besides the Cappell-Miller torsion we need another
version of a complex analytic torsion for arbitrary flat vector bundles 
$F_\chi$. This is the {\it refined analytic torsion} $T^{ran}(X,\chi)
\in\det(H^\ast(X,F_\chi)$ introduced by Braverman and Kappeler \cite{BK2}.  
The definition is based on the consideration of the odd signature operator 
$B_\chi$ \cite[2.1]{BK1}. It is defined as follows. Let
\[
\alpha\colon \Lambda^\ast(X,F_\chi)\to\Lambda^\ast(X,F_\chi)
\]
be the chirality operator defined by
\[
\alpha(\omega):=i^{n+1}(-1)^{k(k+1)/2}\star\omega,\quad \omega\in\Lambda^k(M,F_\chi).
\]
Let $\nabla_\chi$ be the flat connection in $F_\chi$. Then the odd signature
operator is defined as
\begin{equation}
B_\chi:=\alpha \nabla_\chi+\nabla_\chi\alpha.
\end{equation}
It leaves the even subspace $\Lambda^{\ev}(X,F_\chi)$ invariant. Let $B_{\ev,\chi}$
be the restriction of $B_\chi$ to $\Lambda^{\ev}(X,F_\chi)$. Then $T^{\ran}(X,\chi)
\in\det(H^\ast(X,F_\chi))$ is defined in terms of $B_{\ev,\chi}$. If $\chi$ is 
acyclic, then $ T^{\ran}(X,\chi)$ is a complex number.
In \cite{BK3}, Braverman and Kappeler determined the relation between the
Cappell-Miller torsion and the refined analytic torsion. 
Let $\eta(B)$ be the eta-invariant of $B_{\ev,\chi}$. In general,
$B_\chi$ is not self-adjoint and therefore, $\eta(B)$ is in general not real.
Furthermore, let $\eta_0$ be the eta-invariant of the trivial line bundle.
Then by Proposition 4.2 and Theorem 5.1 of \cite{BK3} it follows that
\begin{equation}\label{refined1}
T^\C(X,\chi)=\pm T^{\ran}(X,\chi)^2\cdot e^{-2\pi i(\eta(B)-\dim(\chi)\eta_0)}.
\end{equation}
On the other hand, it follows from \cite[Theorem 1.9]{BK2} that
\begin{equation}\label{refined2}
|T^{\ran}(X,\chi)|=T^{RS}(X,\chi)\cdot e^{\pi\Im(\eta(B))}.
\end{equation}
Combining \eqref{refined1} and \eqref{refined2}, we obtain \eqref{abs-value}.

\subsection{Restriction of repesentations of the underlying Lie group}
The first case that we consider are representations which are restictions to 
$\Gamma$ of representations of $G$.

Let $\rho\colon G\to \GL(V_\rho)$ be a 
finite dimensional real ( resp. complex) representation of $G$. Denote by
$F_\rho\to X$ the flat vector bundle associated to $\rho|_\Gamma$. Let
$\widetilde E_\rho\to G/K$ be the homogeneous vector bundle associated to 
$\rho|_K$. By \cite[Part I, Prop. 3.3]{MM} there is a canonical isomorphism
\begin{equation}\label{isom5}
F_\rho\cong \Gamma\bs \widetilde E_\rho.
\end{equation}
Let $\gf=\kf\oplus\pf$ be the Cartan decomposition of $\gf$. 
By \cite[Part I, Lemma 3.1]{MM}, there exists an inner product $\langle\cdot,
\cdot\rangle$ in $V_\rho$ such that
\begin{enumerate}
\item $\left<\tau(Y)u,v\right>=-\left<u,\tau(Y)v\right>$ for all 
$Y\in\mathfrak{k}$, $u,v\in V_{\tau}$
\item $\left<\tau(Y)u,v\right>=\left<u,\tau(Y)v\right>$ for all 
$Y\in\mathfrak{p}$, $u,v\in V_{\tau}$.
\end{enumerate}
Such an inner product is called admissible. It is unique up to scaling. Fix an 
admissible inner product. Since $\rho|_{K}$ is unitary with respect to this 
inner product, it induces a metric in $\Gamma\bs\widetilde E_{\tau}$ and
by \eqref{isom5} also in $F_\rho$. 
Denote by $T^{RS}(X,\rho)$ the Ray-Singer anaytic torsion
of $(X,F_\rho)$ with respect to metric on $X$ and the  metric in 
$F_\rho$. Denote by $\theta\colon G\to G$ the Cartan involution. Let 
$\rho_\theta:=\rho\circ\theta$. Assume that $\rho\not\cong\rho_\theta$. Then
$H^\ast(X,F_\rho)=0$, i.e., $\rho|_\Gamma$ is acyclic. In this case $T^{RS}(X,\rho)$
is independent of the metrics on $X$ and in $F_\rho$ \cite[Corollary 2.7]{Mu3}.
Let $R^\rho(s):=R(s,\rho)R(s,\rho_\theta)$. Then by \cite[Theorem 8.13]{Wo} 
$R^\rho(s)$ is holomorphic at $s=0$ and
\begin{equation}
R^\rho(0)=T^{RS}(X,\rho)^4.
\end{equation}
Furthermore, from the discussions in \cite[Sect. 9.1]{Wo} follows that 
both $R(s,\rho)$ and $R(s,\rho_\theta)$ are holomorphic at $s=0$ and 
$R(0,\rho_\theta)=\overline{R(0,\rho)}$. Thus it follows that
\begin{equation}\label{value-zero}
|R(0,\rho)|=T^{RS}(X,\rho)^2,
\end{equation}
which is \eqref{value-zero}.
Hence $R(s,\rho)$ is regular at $s=0$ and $R(0,\rho)\neq0$.
Applying Theorem \ref{theo-sing} we obtain Corollary \ref{cor-origin1}.

Next we briefly recall the definition of the Reidemeister torsion \cite{RS}.
We work with vector spaces over $\C$. Let $V$ be $\C$-vector space of dimension
$m$. Let $v=(v_1,...,v_m)$ and $w=(w_1,...,w_m)$ be two basis of $V$. Let 
$T=(t_{ij})$ be the matrix of the change of basis from $v$ to $W$, i.e.,
$w_i=\sum_j t_{ij}v_j$. Put $[W/v]:=\det(T)$. Let
\[
C^\ast\colon C^0\xrightarrow{\delta_0} C^1\xrightarrow{\delta_1}\cdots
\xrightarrow{\delta_{n-2}} C^{n-1}\xrightarrow{\delta_{n-1}}C^n
\]
be a cochain complex of finite dimensional complex vector spaces. 
Let $Z_q=\ker(\delta_q)$ and $B_q:=\Im(\delta_{q-1})\subset C^q$. 
Let $c_q$ (resp. $h_q$) be a preferred base of $C_q$ (resp. $H^q(C^\ast)$). 
Choose a basis $b_q$ for $B_q$, $q=0,...,n$, and let $\tilde b_{q+1}$ be an
independent set in $C_q$ such that $\delta_q(\tilde b_{q+1})=b_{q+1}$, and
let $\tilde h_q$ be an independent set in $Z_q$ which represents the base $h_q$
of $H^q(C^\ast)$. Then $(b_q,\tilde h_q,\tilde b_{q+1})$ is a basis of $C^q$ and
$[b_q,\tilde h_q,\tilde b_{q+1}/c_q]$  depends only on $b_q,h_q$ and $b_{q+1}$. 
Therefore, we denote it by $[b_q,h_q,b_{q+1}/c_q]$. Then the complex
Reidemeister torsion $\tau^\C(C_\ast)\in\C$ of the chain complex $C_\ast$ is  
defined by
\begin{equation}
\tau^\C(C^\ast):=\prod_{q=0}^n[b_q,h_q,b_{q+1}/c_q]^{(-1)^q}.
\end{equation}

Let $K$ be a $C^\infty$-triangulation of $X$ and $\widetilde K$ the lift of 
$K$ to a
triangulation of the univeral covering $\bH^d$ of $X$. Then 
$C^q(\widetilde K,\C)$ is a module over the complex group algebra $\C[\Gamma]$.
Now recall that $\rho$ is the restriction of a representation of $G$. Since
$G$ is a connected semisimple Lie group, it follows from \cite[Lemma 4.3]{Mu3}
that $\rho$ is a representation of $\Gamma$ in $\SL(N,\C)$.
Let
\[
C^q(K,\rho):=C^q(\widetilde K,\C)\otimes_{C[\Gamma]}\C^N
\]
the twisted cochain group and 
\[
C^\ast(K,\rho)\colon 0\to C^0(K,\rho)\xrightarrow{\partial_\rho}C^1(K,\rho)\xrightarrow{\partial_\rho}
\cdots\xrightarrow{\partial_\rho}C^d(K,\rho)\to 0
\]
the corresponding cochain complex. Let $e_1,...,e_{r_q}$ be a 
preferres basis of $C^q(\tilde K,\C)$ as a $\C[\Gamma]$-module consisting
of the duals of lifts of $q$-simpexes and let $v_1,...,v_N$ be a basis of
$\C_N$. Then $\{e_i\otimes v_j\colon i=1,...,r_q,\quad j=1,...,N\}$ is a 
preferres basis of $C^q(K,\rho)$. Now consider the complex-valued 
Reidemeister torsion $\tau^\C(C^\ast(K,\rho))$.
Since $\rho$ is a representation in $\SL(N,\C)$, a different choice of the 
preferred basis $\{e_i\}$ leads at most a to sign change of $\tau^\C(X,\rho)$.
If $v^\prime$ is a different basis of $\C^N$, then $\tau^\C(X,\rho)$ changes by
$[v^\prime/v]^{\chi(X)}$. Hence, if $\chi(X)=0$, $\tau^\C(C^\ast(K,\rho))$ is well
defined as an element of $\C^\ast/\{\pm1\}$. It depends only on the choice of 
the basis $h_q$ of $H^q(X,F_\rho))$. Since every two smooth triangulations of $X$ admit
a common subdivision, it follows from \cite{Mi} that $\tau^\C(C^\ast(K,\rho))$
is independent of the smooth triangulation $K$. Put
\begin{equation}
\tau^\C(X,\rho):=\tau^\C(C^\ast(K,\rho)).
\end{equation}
This is the complex-valued Reidemeister torsion of $X$ and $\rho$.
If $\rho\not\cong\rho_\theta$, then $H^\ast(X,F_\rho)=0$. Thus in this case
$\tau^\C(X,\rho)$ is a combinatorial invariant. It follows from property A,
satisfied by $\tau_{comb}(X,\rho)$ \cite[6.2]{CM}, that
\begin{equation}
\tau_{\comb}(X,\rho)=\tau^\C(X,\rho)^2.
\end{equation}
This implies \eqref{compl-reidem}.

\subsection{Deformations of acyclic unitary representations}
Let $\Rep(\Gamma,\C^n)$ be the set of all $n$-dimensional complex 
representations of $\Gamma$. It is well known that $\Rep(\Gamma,\C^n)$ has 
a natural structure of a complex algebraic variety \cite[13.6]{BK1}. Recall
that $\chi\in\Rep(\Gamma,\C^n)$ is called acyclic, if $H^\ast(X,F_\chi)=0$,
where $F_\chi\to X$ is the flat vector bundle associated to $\chi$. Denote
by $\Rep_0(\Gamma,\C^n)\subset \Rep(\Gamma,\C^n)$ the subset of all acyclic
representations. A representation $\chi\in\Rep(\Gamma,\C^n)$ is called 
unitary, if there exists a Hermitian scalar product $\langle\cdot,\cdot\rangle$
on $\C^n$ which is preserved by all maps $\chi(\gamma)$, $\gamma\in\Gamma$.
Let $\Rep^u_0(\Gamma,\C^n)\subset \Rep_0(\Gamma,\C^n)$ be the subset of all
unitary acyclic representations. By \cite[Theorem 1.1]{FN} we get
\begin{prop}\label{prop-acyclic1}
For every compact hyperbolic manifold $\Gamma\bs\bH^d$, we have
$\Rep_0^u(\Gamma,\C^n)\neq\emptyset$.
\end{prop}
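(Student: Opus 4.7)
The statement is quoted directly from \cite[Theorem 1.1]{FN}, so a self-contained proof would amount to re-deriving that result. My plan is to sketch the strategy I would use.

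The first step is to translate acyclicity into a group-cohomological condition. Since $X=\Gamma\bs\bH^d$ is a $K(\Gamma,1)$, one has $H^\ast(X,F_\chi)\cong H^\ast(\Gamma,V_\chi)$, so acyclicity means $H^p(\Gamma,V_\chi)=0$ for every $0\le p\le d$. When $\chi$ is unitary, Poincar\'e duality combined with the identification $\bar\chi\cong\chi^\ast$ via the invariant Hermitian pairing reduces the condition to vanishing in degrees $0\le p\le n$, where $d=2n+1$. Degree zero simply says that $\chi$ has no $\Gamma$-fixed vectors.

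The second step is to construct a candidate. Residual finiteness of $\Gamma$, which follows from Mal'cev's theorem because $\Gamma\subset\SO_0(d,1)$ is finitely generated and linear, provides an abundant supply of finite-dimensional unitary representations via pullback from finite quotients. I would choose a finite-index normal subgroup $N\triangleleft\Gamma$ with quotient $Q$ and a nontrivial irreducible unitary representation $\sigma$ of $Q$, and set $\chi=\sigma\circ\pi$. The Hochschild--Serre spectral sequence realizes $H^\ast(X,F_\chi)$ as the $\sigma$-isotypic summand of $H^\ast(X_N,\C)\otimes V_{\sigma^\ast}$ for the finite cover $X_N=N\bs\bH^d$, and choosing $\sigma\neq\1$ automatically gives $H^0(X,F_\chi)=H^d(X,F_\chi)=0$.

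The third step, and the main obstacle, is killing the intermediate cohomology, which a single such pullback generally leaves nonzero. To handle it I would deform $\chi$ inside the representation variety $\Rep(\Gamma,\C^n)$, which has a natural complex algebraic structure. Acyclicity is a Zariski-open condition, so the task is to exhibit a single point in the open locus. One approach is to enlarge $n$ (allowing direct sums or tensor products with auxiliary unitary representations) until the dimension of $\Rep(\Gamma,\C^n)$ is large enough to dominate the codimension of the non-acyclic locus; another is an arithmetic construction that, through congruence quotients of an arithmetic $\Gamma$, produces $\chi$ with enough symmetry to decouple cohomological degrees. Either route requires a careful analysis of how cohomology varies in families; once accomplished, the resulting acyclic $\chi$ serves as a basepoint for the neighborhood $V$ invoked in Proposition~\ref{prop-acyclic}, and openness of the invertibility of $\Delta_\chi^\sharp$ finishes the application.
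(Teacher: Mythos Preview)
The paper does not give a proof of this proposition at all: the sentence immediately preceding it is ``By \cite[Theorem 1.1]{FN} we get'', and the statement is then recorded without further argument. You correctly identify this, so in that sense your proposal matches the paper exactly.

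Your independent sketch, however, has a genuine gap at the third step. Knowing that acyclicity is Zariski-open in $\Rep(\Gamma,\C^n)$ tells you nothing until you have produced \emph{one} acyclic point, which is precisely what is at stake; and generic deformations in the complex variety will leave the \emph{real} locus of unitary representations, so a deformation argument cannot simultaneously achieve acyclicity and unitarity without a separate mechanism forcing you to stay unitary. The two fallback suggestions (enlarging $n$, or an unspecified arithmetic construction) are not arguments but placeholders. In short, steps one and two are sound reductions, but step three does not contain an idea that would close the problem.

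For comparison, the actual argument in \cite{FN} proceeds very differently and is specific to dimension three: one invokes Agol's virtual fibering theorem to pass to a finite cover that fibers over $S^1$, and then finds a one-dimensional unitary character (so $n=1$) via an Alexander-polynomial/twisted-homology computation on the fiber. That route is concrete and avoids any deformation in the representation variety. It is also worth noting that \cite{FN} treats $3$-manifolds, so invoking it for arbitrary $d$ as the paper does already requires some care; your sketch does not address the higher-dimensional case either.
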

Now let $\chi\in\Rep^u_0(\Gamma,\C^n)$.
For such a representation the flat Laplacian
$\Delta_{k,\chi}^\sharp$ equals the usual Laplace operator $\Delta_{k,\chi}$ and
$T^\C(X,\chi)=T^{RS}(X,\chi)^2$. Moreover, $h_k=0$, $k=0,...,d$, which implies
 $h=0$ and $T_0(X,\chi)=1$. Thus $R(s,\chi)$ is regular at $s=0$ and from 
Theorem \eqref{theo-sing} we recover Fried's result \cite{Fr1}
\begin{equation}
R(0,\chi)=T^{RS}(X,\chi)^2.
\end{equation}
We equip $\Rep(\Gamma,\C^n)$ with the topology obtained from its structure as
complex algebraic variety. The complement of the singular set is a complex
manifold.  Let $W\subset\Rep(\Gamma,\C^n)$ be the connected component of 
$\Rep(\Gamma,\C^n)$ which contains $\Rep^u_0(\Gamma,\C^n)$. Let $\chi_0\in W$
be a unitary acyclic representation and let $E_0$ be the associated flat vector
bundle. By \cite[Prop. 4.5]{GM} every vector bundles $E_\chi$, $\chi\in W$, is
isomorphic to $E_0$. Thus the flat connection  on $E_\chi$, which is
induced by the trivial connection on $\widetilde X\times\C^n$, corresponds
to a flat connection $\nabla_\chi$ on $E_0$. Now recall that 
\[
\Delta_\chi^\sharp=(d_\chi+d_\chi^{\ast,\sharp})^2,
\]
where
\[
d_\chi^{\ast,\sharp}\big|_{\Lambda^p(X,E_\chi)}=(-1)^p(\star\otimes\Id)d_\chi
(\star\otimes\Id).
\]
Via the isomorphism $E_\chi\cong E_0$, the operator $d_\chi+d_\chi^{\ast,\sharp}$ 
corresponds to the operator
\[
D_\chi^\sharp:=\nabla_\chi+\nabla_\chi^{\ast,\sharp}\colon \Lambda^\ast(X,E_0)\to\Lambda^\ast(X,E_0),
\]
where 
\[
\nabla_\chi^{\ast,\sharp}=(-1)^p(\star\otimes\Id)\nabla_\chi(\star\otimes\Id).
\]
Let $\nabla_0$ be the unitary flat connection on $E_0$. Let $\Co(E_0)$ denote
the space of connections on $E_0$. Recall that $\Co(E_0)$ can be identified 
with $\Lambda^1(X,\End(E_0))$ by associating to a connection $\nabla\in\Co(E_0)$
the $1$-form $\nabla-\nabla_0\in\Lambda^1(X,\End(E_0))$. We equipp $\Co(E_0)$
with the $C^0$-topology defined by the sup-norm $\|\omega\|_{\sup}:=\max_{x\in X}
|\omega(x)|$, $\omega\in\Lambda^1(X,\End(E_0))$, where $|\cdot|$ denotes the  
natural norm on $\Lambda^1(T^\ast X)\otimes E_0$. Since $E_0$ is acyclic, 
$D_0:=\nabla_0+\nabla_0^\ast$ is invertible. If $\|\nabla_\chi-\nabla_0\|\ll 1$
it follows as in  \cite[Prop 6.8]{BK1} that $D_\chi^\sharp$ is invertible and 
hence $\Delta_{\chi}^\sharp=(D_\chi^\sharp)^2$ is invertible too. 
Thus we get
\begin{lem}\label{lem-nbh}
There exists an open neighborhood $V\subset W$ of $\Rep_0^u(\Gamma,\C^n)$
such that $\Delta_{\chi}^\sharp$ is invertible for all $\chi\in V$.
\end{lem}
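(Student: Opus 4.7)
The plan is to identify all flat bundles $E_\chi$ for $\chi\in W$ with a single reference bundle $E_0$, rewrite $\Delta_\chi^\sharp$ as the square of a first-order operator $D_\chi^\sharp$ on $E_0$ that is visibly invertible at any unitary acyclic representation, and then invoke an open-neighbourhood perturbation argument of the type recorded in \cite[Prop.~6.8]{BK1}. The paragraph preceding the lemma already installs the necessary framework; I will turn it into an argument.

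First I would fix $\chi_0\in\Rep_0^u(\Gamma,\C^n)$ and use \cite[Prop.~4.5]{GM} to transport the flat structure on $E_\chi$ to a flat connection $\nabla_\chi$ on the reference bundle $E_0$ associated to $\chi_0$. The operator $D_\chi^\sharp:=\nabla_\chi+\nabla_\chi^{\ast,\sharp}$ then satisfies $(D_\chi^\sharp)^2=\Delta_\chi^\sharp$, so it suffices to show that $D_\chi^\sharp$ is invertible for $\chi$ in a neighbourhood of $\chi_0$. At $\chi=\chi_0$ the representation is unitary, hence $\nabla_0^{\ast,\sharp}=\nabla_0^\ast$, so that $D_0$ is the formally self-adjoint elliptic first-order operator $\nabla_0+\nabla_0^\ast$ on $\Lambda^\ast(X,E_0)$. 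By standard Hodge theory its kernel is isomorphic to $H^\ast(X,E_0)$, which vanishes by acyclicity of $\chi_0$; thus $D_0$ is invertible.

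Next I would verify that the assignment $\chi\mapsto\nabla_\chi\in\Co(E_0)$ is continuous in the $C^0$-topology defined just before the lemma. Locally around $\chi_0$ this can be arranged by fixing once and for all a finite atlas of flat trivialisations of $E_0$: the transition data of $E_\chi$, and hence the coefficients of the $\End(E_0)$-valued $1$-form $\nabla_\chi-\nabla_0$, can be written as smooth functions whose values depend polynomially on the matrices $\chi(\gamma_1),\dots,\chi(\gamma_m)$ for a finite generating set of $\Gamma$. In particular $\|\nabla_\chi-\nabla_0\|_{\sup}\to 0$ as $\chi\to\chi_0$.

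Finally, since $D_\chi^\sharp-D_0$ is a zero-order operator whose sup-norm tends to zero as $\chi\to\chi_0$, the argument of \cite[Prop.~6.8]{BK1} applies verbatim: $D_\chi^\sharp$ is a bounded perturbation of an invertible elliptic operator and therefore remains invertible on a $C^0$-neighbourhood of $\nabla_0$, which pulls back to an open set $V_{\chi_0}\subset W$ containing $\chi_0$ on which $\Delta_\chi^\sharp=(D_\chi^\sharp)^2$ is invertible. Taking $V:=\bigcup_{\chi_0\in\Rep_0^u(\Gamma,\C^n)}V_{\chi_0}$ yields the desired neighbourhood. The main subtlety is the continuity step: \cite[Prop.~4.5]{GM} provides the isomorphisms $E_\chi\cong E_0$ only pointwise, and a little care is needed to patch them into a family of bundle isomorphisms depending continuously on $\chi$. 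This, however, only has to be done locally, which is compatible with building $V$ as a union of local neighbourhoods.
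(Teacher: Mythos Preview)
Your proposal is correct and follows essentially the same route as the paper: identify all $E_\chi$ with a fixed $E_0$ via \cite[Prop.~4.5]{GM}, write $\Delta_\chi^\sharp=(D_\chi^\sharp)^2$ with $D_\chi^\sharp=\nabla_\chi+\nabla_\chi^{\ast,\sharp}$, observe that $D_0$ is invertible by acyclicity, and invoke \cite[Prop.~6.8]{BK1} for small $\|\nabla_\chi-\nabla_0\|_{\sup}$. You in fact spell out two points the paper leaves implicit---the continuity of $\chi\mapsto\nabla_\chi$ in the $C^0$-topology and the passage from a local neighbourhood of a single $\chi_0$ to a neighbourhood of all of $\Rep_0^u(\Gamma,\C^n)$ by taking a union---so your argument is, if anything, slightly more complete.
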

Let $\chi\in V$. Then we have $h_k=\dim(\Delta_{k,\chi}^\sharp)=0$, $k=0,...,d$,
and therefore the order $h$ of $R(s,\chi)$ at $s=0$ vanishes. Also $C=1$ and
$T_0(X,\chi)=1$. Thus by Theorem \ref{theo-sing} we obtain Proposition 
\ref{prop-acyclic}.

\end{document}